\newcommand{\figdir}{./figures}
\newtheorem{statement}{}
\newtheorem{theo}[statement]{Theorem}
\newtheorem{coro}[statement]{Corollary}
\newtheorem{proposition}[statement]{Proposition}
\newtheorem{remq}[statement]{Remark}
\newtheorem{lem}[statement]{Lemma}
\newcommand{\DD}{\mathbb{D}}
\newcommand{\T}{\mathbb{T}}
\renewcommand{\Im}{\mathrm{Im }}
\newcommand\CC{\mathbb{C}}
\newcommand\TT{\mathbb{T}}
\newcommand\RR{\mathbb{R}}
\DeclareMathOperator{\dist}{dist}
\begin{document}

\bibliographystyle{plain}

\title{Cyclicity in weighted Bergman type spaces}

\author[Borichev]{A. Borichev}
\address{Borichev: LATP\\ Aix-Marseille Universit\'e\\39 rue F. Joliot-Curie\\ 13453 Marseille
\\France}
\email{borichev@cmi.univ-mrs.fr}

\author[El-Fallah]{O. El-Fallah}
\address{El-Fallah: Laboratoire Analyse et Applications -- URAC/03. D\'{e}partement de Math\'ema\-tiques,  Universit\'e Mohammed V, Rabat--Agdal--B.P. 1014 Rabat, Morocco}
\email{elfallah@fsr.ac.ma}

\author[Hanine]{A. Hanine}
\address{Hanine: Laboratoire Analyse et Applications -- URAC/03. D\'{e}partement de Math\'emati\-ques,  Universit\'e Mohammed V, Rabat--Agdal--B.P. 1014 Rabat, Morocco}
\address{LATP\\ Aix-Marseille Universit\'e\\39 rue F. Joliot-Curie\\ 13453 Marseille
\\France}
\email{hanine@cmi.univ-mrs.fr, abhanine@gmail.com}

\date{ \today}

\keywords{Weighted Bergman space, cyclic function, singular inner function, generalized Phragm\'{e}n--Lindel\"{o}f principle, resolvent method}
\subjclass[2000]{30H05, 46E20, 47A15}

\thanks{The research of the first author was partially supported by
the ANR FRAB. The research of the second and the third authors was partially supported by ``Hassan II Academy of Science and Technology". The research of the authors was partially supported by 
the Egide Volubilis program.}

\begin{abstract}
We use the so called resolvent transform method to study the cyclicity of 
the one point mass singular inner function in weighted Bergman type spaces.  
\end{abstract}
\maketitle

\section{Introduction.}

Let $X$ be a space of analytic functions in the open unit disc $\mathbb{D}$ 
of the complex plane $\CC$. Assume that $X$ is invariant with respect to the shift operator $M_{z}$ defined by $M_{z}(f) = zf$, $f\in X$. Given a function $f\in X$, the closed linear span in $X$ of all polynomial multiples of $f$ is denoted by $[f]$, it is 
an $M_{z}$-invariant subspace of $X$; is also the smallest closed $M_{z}$-invariant subspace of $X$ containing $f$, namely 
$$
[f]=\overline{\{pf : p \text{ is a polynomial}\}}.
$$
An element $f$ of $X$ is said to be cyclic (weakly invertible) in $X$ if $[f]= X$. 
If the polynomials are dense in $X$, an equivalent condition is that $1\in[f]$.

Given a positive non-increasing continuous function $\Lambda$ on $(0,1]$ and 
$E\subset\T=\partial\DD$, 
we denote by $\mathcal{B}_{\Lambda,E}^\infty$ the space of all analytic functions $f$ on $\DD$ such that
$$
\|f\|_{\Lambda,E,\infty}=\sup_{z\in\DD}|f(z)|e^{-\Lambda(\dist(z,E))}<+\infty,
$$
and by $\mathcal{B}_{\Lambda,E}^{\infty,0}$ its separable subspace 
$$
\mathcal{B}_{\Lambda,E}^{\infty,0}=\bigl\{f\in \mathcal{B}_{\Lambda,E}^\infty:
\lim_{\dist(z,E)\to 0}|f(z)|e^{-\Lambda(\dist(z,E))}=0\bigr\}.
$$
Analogously, integrating with respect to area measure on the disc, we define the spaces 
$\mathcal{B}_{\Lambda,E}^p$, $1\le p<\infty$: 
$$
\mathcal{B}_{\Lambda,E}^p=\Bigl\{f\in\textrm{Hol}(\DD):
\|f\|^p_{\Lambda,E,p}=\int_{\DD}|f(z)|^p e^{-p\Lambda(\dist(z,E))}<+\infty\Bigr\}.
$$

If $E=\T$, we use the notation 
$\mathcal{B}_{\Lambda}^\infty$, $\mathcal{B}_{\Lambda}^{\infty,0}$, 
$\mathcal{B}_{\Lambda}^p$. Let us remark that either $\Lambda(0^{+})=+\infty$ or 
$\mathcal{B}_{\Lambda}^\infty=H^\infty$, $\mathcal{B}_{\Lambda}^{\infty,0}=\{0\}$,
$\mathcal{B}_{\Lambda}^p=\mathcal{B}_{0}^p$.

Given a sequence of positive numbers $w=(w_n)$ such that $|\log w_n|=o(n)$, $n\to\infty$,  
we define by $H^2_w$ the space of functions $f$ analytic in the unit disc such that 
$$
f(z)=\sum_{n\ge 0}a_nz^n,\qquad \sum_{n\ge 0}\frac{|a_n|^2}{w_n}<\infty.
$$
It is known that for log-convex sequences $w$ such spaces coincide with 
$\mathcal{B}_{\Lambda,\T}^2$ spaces for $\Lambda$ defined by $w$ (see \cite{BH}). 

The questions on cyclicity in the large Bergman type spaces $\mathcal{B}_{\Lambda}^p$, 
$1\le p<\infty$, go back to Carleman and Keldysh. In particular, Keldysh \cite{KE} proved in 1945 
that the singular function with one point singular mass $S(z)=e^{-\frac{1+z}{1-z}}$ 
is not cyclic in $\mathcal{B}_{1}^2$. Beurling \cite{B} studied cyclicity of $S$ in inductive limits of $H^2_w$ ($\mathcal{B}_{\Lambda}^2$) spaces. In 1974 Nikolski 
\cite[Section 2.6]{NN} proved that if $\liminf_{t\to 0}\frac{\Lambda(t)}{\log1/t}>0$, 
and $S$ is cyclic in $\mathcal{B}_\Lambda ^\infty$, then
\begin{equation}
\int_0\sqrt{\frac{\Lambda(t)}{t}}dt=\infty.
\label{y9}
\end{equation}
In the opposite direction, he proved that if  
\begin{equation}
\text{the function $t\mapsto t\Lambda'(t)$ does not decrease,}
\label{y29}
\end{equation}
and \eqref{y9} holds, then $S$ is cyclic in $\mathcal{B}_\Lambda ^\infty$. Since the proof relied on the quasianalyticity property 
of an auxiliary class of functions, this convexity type condition \eqref{y29} was indispensable here.

In the first part of our paper we prove the following results:

\begin{theo}
Let $\Lambda$ be a positive non-increasing continuous function on $(0,1]$.
Then $S(z)=e^{-\frac{1+z}{1-z}}$ is cyclic in $\mathcal{B}_\Lambda^{\infty,0}$ if and only if $\Lambda$ satisfies \eqref{y9}. 
\label{tem}
\end{theo}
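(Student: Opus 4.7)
\emph{Necessity of \eqref{y9}.} Throughout, assume $\Lambda(0^{+})=+\infty$, otherwise $\mathcal{B}_\Lambda^{\infty,0}=\{0\}$ and there is nothing to prove. Assuming cyclicity, pick polynomials $p_n$ with $\|1-p_nS\|_{\Lambda,\T,\infty}\to 0$. The decay $|S(z)|\asymp\exp(-2\operatorname{Re}((1-z)^{-1}))$ is concentrated near $z=1$, so the estimate $|1-p_n(z)S(z)|=o(e^{\Lambda(\dist(z,\T))})$ yields a lower bound $\log|p_n(z)|\geqsim 1/|1-z|$ in a Stolz angle at $1$, while elsewhere near $\T$ one retains $\log|p_n(z)|\le\Lambda(\dist(z,\T))+O(1)$. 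A two-constants / harmonic-majorant estimate for the subharmonic function $\log|p_n|$, together with the arbitrariness of $n$, then forces \eqref{y9}. This is essentially Nikolski's classical argument, but working in the separable subspace $\mathcal{B}_\Lambda^{\infty,0}$ absorbs the $O(1)$ error terms that previously required the auxiliary assumption $\liminf_{t\to 0}\Lambda(t)/\log(1/t)>0$.

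\emph{Sufficiency of \eqref{y9}.} This is the main content, to be attacked by the resolvent transform method alluded to in the introduction. Let $\ell\in(\mathcal{B}_\Lambda^{\infty,0})^{*}$ annihilate $[S]$; the goal is to show $\ell=0$. To $\ell$ one associates the resolvent transform
\[
F(\lambda)=\Bigl\langle\frac{S(z)-S(\lambda)}{z-\lambda},\ell\Bigr\rangle,
\]
analytic in $\lambda\in\DD$. The condition $\ell(z^{n}S)=0$ for all $n$ translates into the factorisation $F(\lambda)=S(\lambda)\,\ell\bigl((\lambda-z)^{-1}\bigr)$ outside the closed unit disc, the two sides being matched by analytic continuation across $\T$; the continuity of $\ell$ translates into growth bounds on $F$ involving the weight $\Lambda(\dist(\cdot,\T))$ and the exponential rate of $|S|^{-1}$ near $1$. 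A generalized Phragm\'en--Lindel\"of principle applied to $\log|F|$, with a harmonic majorant balancing $\Lambda$ against the $1/|1-\lambda|$ singularity of $\log|S|^{-1}$, forces $F\equiv 0$ precisely under the divergence \eqref{y9}. The expansion $\ell\bigl((\lambda-z)^{-1}\bigr)=\sum_{n\ge 0}\ell(z^{n})\lambda^{-n-1}$ at infinity then gives $\ell(z^{n})=0$ for every $n\ge 0$, and density of the polynomials in the separable space $\mathcal{B}_\Lambda^{\infty,0}$ yields $\ell=0$, whence $[S]=\mathcal{B}_\Lambda^{\infty,0}$.

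\emph{Main obstacle.} The hardest step is the Phragm\'en--Lindel\"of principle at the end of the sufficiency argument: one must deduce $F\equiv 0$ from growth estimates that balance the weight $\Lambda$ against the singular rate $1/|1-\lambda|$ coming from $\log|S|^{-1}$, using \emph{only} the divergence \eqref{y9}. In Nikolski's approach this was reduced, after a logarithmic change of variable, to a Denjoy--Carleman quasianalyticity statement on the real line, and the convexity hypothesis \eqref{y29} was used precisely to make that reduction work. Removing it requires a direct construction of a subharmonic majorant on the disc, and one expects to have to regularise $\Lambda$ by a suitable concave minorant with the same divergence behaviour before carrying out the construction. This comparison is the technical heart of the proof; the remaining steps, including the passage from $F\equiv 0$ back to $\ell=0$, are comparatively routine.
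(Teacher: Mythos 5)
Your outline has the right skeleton (resolvent transform for sufficiency, reduction to Nikolski for necessity), but both halves have genuine gaps at precisely the points where the work lies. On the necessity side, your claim that working in the separable subspace ``absorbs the $O(1)$ error terms'' and thereby removes Nikolski's hypothesis $\liminf_{t\to0}\Lambda(t)/\log(1/t)>0$ is unsubstantiated, and it is not the mechanism: that hypothesis enters through the growth of the weight in the quasianalyticity argument, not through the little-$o$ refinement of the norm. The correct device is much shorter: replace $\Lambda$ by $\Lambda^*(t)=\Lambda(t)+\log(1/t)$. Cyclicity of $S$ in $\mathcal{B}_\Lambda^{\infty,0}$ passes to the larger space $\mathcal{B}_{\Lambda^*}^{\infty,0}$, the condition \eqref{y9} is unchanged because $\int_0\sqrt{\log(1/t)/t}\,dt<\infty$, and $\Lambda^*$ satisfies Nikolski's hypothesis, so his theorem applies verbatim. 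Your sketched two-constants argument is too vague to check and is in any case not needed.

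On the sufficiency side there are two problems. First, the Phragm\'en--Lindel\"of step cannot be run on $F$ itself: outside $\overline{\DD}$ one has $F(\lambda)=S(\lambda)\,\ell\bigl((\lambda-z)^{-1}\bigr)$ and $|S(1+\varepsilon)|=e^{(2+o(1))/\varepsilon}$, so $F$ inherits the essential singularity of $S$ at $1$ and no Liouville argument can conclude $F\equiv0$; one must divide out $S(\lambda)$ and work with the resolvent $\lambda\mapsto(\lambda-\pi(\alpha))^{-1}\pi(1)$ (i.e.\ $F/S$), whose growth near $1$ \emph{inside} the disc is the real issue. Second, and centrally, a Phragm\'en--Lindel\"of principle is worthless without boundary data, and your proposal contains no mechanism for producing it: the trivial bound on the resolvent in $\DD$ is $\exp\bigl(c\frac{1-|\lambda|^2}{|1-\lambda|^2}\bigr)$, exponentially large near $1$. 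The missing ingredient is the family of outer functions $f_\lambda$ built on the Privalov shadows, normalized by $|f_\lambda(\lambda)S(\lambda)|=1$ with $\sup_\lambda\|f_\lambda S\|_\Lambda<\infty$ on the region $\Gamma_{\Lambda,\T}(a,A)$ (Lemma~\ref{y11}); fed into the identity $f(\lambda)S(\lambda)\pi(1)=(\lambda-\pi(\alpha))\pi(L_\lambda(fS))$, this yields the polynomial bound $O(|1-\lambda|^{-4})$ on $\partial\Omega_\Lambda$. Only then does the Ahlfors--Carleman harmonic-measure estimate --- which identifies the divergence of $\int_0\sqrt{\Lambda(t)/t}\,dt$ with exactly the condition under which Phragm\'en--Lindel\"of holds on the thin domain $\Omega_\Lambda$ (Corollary~\ref{coro1}) --- propagate that bound inside $\Omega_\Lambda$, after which Levinson's log-log theorem carries it across $\T$ and Liouville applied to $(\lambda-1)\langle(\lambda-\pi(\alpha))^{-1}\pi(1),\phi\rangle$ finishes. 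Your ``main obstacle'' paragraph concedes this step is unresolved, and the subharmonic-majorant/concave-regularization route you propose there is not what is required: no regularization of $\Lambda$ beyond smoothing is needed once the boundary estimates from the $f_\lambda$'s are in hand.
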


\begin{theo}
Let $1\le p<\infty$ and let $\Lambda$ be a positive non-increasing continuous function on $(0,1]$. Then $S(z)=e^{-\frac{1+z}{1-z}}$ is cyclic in $\mathcal{B}_\Lambda^p$ if and only if $\Lambda$ satisfies \eqref{y9}.
\label{tembis}
\end{theo}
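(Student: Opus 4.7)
The plan is to reduce Theorem~\ref{tembis} to Theorem~\ref{tem} via two continuous embeddings. First, since the area of $\DD$ is finite,
\[
\|f\|_{\Lambda,\T,p}\le \pi^{1/p}\|f\|_{\Lambda,\T,\infty},
\]
so $\mathcal{B}_\Lambda^{\infty}\hookrightarrow \mathcal{B}_\Lambda^p$. In the opposite direction, applying the submean-value inequality to the subharmonic function $|f|^p$ on the disc $B(z,(1-|z|)/2)\subset\DD$ and using that $\Lambda$ is non-increasing yields the pointwise bound
\[
|f(z)|\le C(1-|z|)^{-2/p}\exp\bigl(\Lambda((1-|z|)/2)\bigr)\|f\|_{\Lambda,\T,p}.
\]
Setting $\Lambda^\sharp(t):=\Lambda(t/2)+(2/p)\log(1/t)$, this is exactly $\mathcal{B}_\Lambda^p\hookrightarrow \mathcal{B}_{\Lambda^\sharp}^{\infty}$. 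The $o(1)$ boundary behaviour characterising $\mathcal{B}_{\Lambda^\sharp}^{\infty,0}$ then follows by localising the same pointwise bound to annuli $\{|z|>r\}$ on which $\int_{|w|>r}|f|^pe^{-p\Lambda}\,dA<\varepsilon^p$.

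Next I check that condition~\eqref{y9} is invariant under $\Lambda\mapsto\Lambda^\sharp$. Using the subadditivity $\sqrt{\Lambda^\sharp(t)}\le\sqrt{\Lambda(t/2)}+\sqrt{(2/p)\log(1/t)}$, the elementary convergence $\int_0\sqrt{\log(1/t)/t}\,dt<\infty$ (substitute $t=e^{-u}$), and the change of variable $u=t/2$, one sees that $\int_0\sqrt{\Lambda(t)/t}\,dt$ and $\int_0\sqrt{\Lambda^\sharp(t)/t}\,dt$ diverge simultaneously.

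Since the polynomials are dense in both $\mathcal{B}_\Lambda^p$ and $\mathcal{B}_{\Lambda^\sharp}^{\infty,0}$ (standard dilation argument $f(z)\mapsto f(rz)$, $r\to 1^{-}$), cyclicity of $S$ transports through the embeddings. \emph{Sufficiency}: if~\eqref{y9} holds, Theorem~\ref{tem} furnishes polynomials $p_n$ with $p_nS\to 1$ in $\mathcal{B}_\Lambda^{\infty,0}$, and the first embedding upgrades this to $p_nS\to 1$ in $\mathcal{B}_\Lambda^p$. \emph{Necessity}: if $p_nS\to 1$ in $\mathcal{B}_\Lambda^p$, the second embedding gives the same convergence in $\mathcal{B}_{\Lambda^\sharp}^{\infty,0}$, Theorem~\ref{tem} yields~\eqref{y9} for $\Lambda^\sharp$, and the invariance above gives it for $\Lambda$. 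The only slightly delicate book-keeping step in this reduction is verifying the boundary vanishing in the nontrivial embedding; the genuine difficulty of the problem lies entirely in Theorem~\ref{tem}, proved via the resolvent transform method advertised in the abstract.
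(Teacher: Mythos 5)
Your reduction is exactly the paper's: the authors prove Theorem~\ref{tembis} by noting $\mathcal{B}_\Lambda^{\infty,0}\subset\mathcal{B}_\Lambda^p\subset\mathcal{B}_{\Lambda^*}^{\infty,0}$ with $\Lambda^*(t)=\Lambda(t/2)+\frac{2}{p}\log\frac1t$ (your $\Lambda^\sharp$), invoking subharmonicity of $|f|^p$ and the simultaneous validity of \eqref{y9} for $\Lambda$ and $\Lambda^*$, then applying Theorem~\ref{tem}. Your write-up is correct and simply supplies the details (submean-value estimate, boundary vanishing, invariance of \eqref{y9}) that the paper leaves to the reader.
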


For every $\Lambda$, $1\le p<\infty$ we have
$$
\mathcal{B}_\Lambda^{\infty,0}=\mathcal{B}_{\widetilde{\Lambda}}^{\infty,0},\qquad
\mathcal{B}_\Lambda^p=\mathcal{B}_{\widetilde{\Lambda}}^p,
$$
where 
$$
\widetilde{\Lambda}(t)=\inf_{s\le t}\Lambda(s);
$$
the function $\widetilde{\Lambda}$ is non-decreasing.
Thus, we are able to get rid of any regularity condition  
on $\Lambda$. That is possible because we use the so-called resolvent transform method exposed, for example, in \cite{YD,BH}. This technique was introduced by Carleman and Gelfand; 
it was later rediscovered and used upon by Domar 
to study closed ideals in Banach algebras. 

An open question here is whether the resolvent transform method could help in improving 
the result of Nikolski \cite[Section 2.6]{NN} concerning cyclicity of zero-free functions of restricted growth (say $f\in \mathcal{B}_{\Lambda_1}^\infty$, $f$ is zero-free implies 
that $f$ is cyclic in $\mathcal{B}_{\Lambda}^{\infty,0}$ under some conditions on the pair 
$(\Lambda_1,\Lambda)$).

One more remark is that the Hadamard three--circles theorem (see, for instance, \cite[Theorem 3.5, Chapter 8]{LA}) shows that we can replace $\Lambda$ by its log-convex minorant $\widetilde{\Lambda}$; the function $t\mapsto (1-t)\Lambda'(t)$ does not decrease. Of course, this condition is much weaker than \eqref{y29}.

Let us also mention here that the convergence of the integral in \eqref{y9} appears 
in the work of Hayman and Korenblum \cite{HK1,HK2} as necessary and sufficient for the 
existence of some Nevanlinna-type representation and for validity of the Blaschke 
conditions for zeros on radii for functions in $\mathcal{B}_{\Lambda}^\infty$. On the other hand, the Blaschke type conditions in spaces $\mathcal{B}_{\Lambda,E}^{\infty}$ 
were recently discussed in \cite{BGK} and \cite{FG}, see also 
the references therein.

In 1986 Gevorkyan and Shamoyan \cite{GS} obtained 
a necessary and sufficient condition, 
\begin{equation}
\int_0\Lambda(t)\,dt=\infty,
\label{gs}
\end{equation} 
for cyclicity of $S$ in 
$\mathcal{B}_{\Lambda,\{1\}}^{\infty,0}$ under some regularity conditions on $\Lambda$.
Recently, El-Fallah, Kellay, and Seip \cite{EKS} improved the results of Beurling and Nikolski 
for cyclicity of bounded zero-free functions in $H_w^2$ spaces. Furthermore, improving on the result by Gevorkyan--Shamoyan they obtained that \eqref{gs} is necessary and sufficient for cyclicity of $S$ in 
$\mathcal{B}_{\Lambda,\{1\}}^{\infty,0}$, the only regularity condition being that $\Lambda$ is decreasing. Their method of proof (applying the Corona theorem) 
is quite different from what we use here. 

It is now a natural question to describe 
$\Lambda$ such that $S$ is cyclic in $\mathcal{B}_{\Lambda,E}^{\infty,0}$, $\mathcal{B}_{\Lambda,E}^2$ in terms of the behavior of $E$ near the point $1$. Our method 
applies directly in the case where $E$ is a closed arc containing $1$ (see Remark~\ref{r6} at the end of Section~\ref{s4}). 

In the second part of our paper, for sufficiently regular $\Lambda$ 
we get necessary and sufficient conditions in terms of $E$
(Theorems~\ref{teo0} and \ref{teo1}). 
More precisely, for $\Lambda$ defined by 
$\Lambda(t)=\frac1{tw(t)^2}$ with sufficiently regular $w$, 
the function $S$ is cyclic in $\mathcal{B}_{\Lambda,E}^{\infty,0}$ if and only if one of the following three quantities is infinite: 
$$
\int_{e^{it}\in E}\frac{dt}{|t|w(|t|)},\quad  
\int_{0}\frac{dt}{|t|w^2(|t|)},\quad 
\sum_{n}\frac1{w(b_n)^2}
\log\Bigl[1+\Bigl(1-\frac{a_n}{b_n}\Bigr)w(b_n)\Bigr],
$$ 
where the sums runs by all the complementary arcs to $E$: 
$(e^{ia_n},e^{ib_n})$, $(e^{-ib_n},e^{-ia_n})$, $0<a_n<b_n$.
As applications, we get 
a result (Theorem~\ref{teo2}) interpolating between the theorems of Nikolski and Gevorkyan--Shamoyan, 
and a result when $E$ is the Cantor ternary set (Theorem~\ref{teo3}).

We prove our Theorems~\ref{tem} and \ref{tembis} in Section~\ref{s4}. Before that, we establish 
a Phragm\'en--Lindel\"of type theorem in Section~\ref{s2} and 
prove some growth estimates on auxiliary outer functions in 
Section~\ref{s3}. Finally, for general $E$ we construct an auxiliary domain in Section~\ref{s5} and 
obtain cyclicity results 
in Section~\ref{s6}.

\section{Generalized Phragm\'en--Lindel\"of Principle}
\label{s2}

We start with the Ahlfors--Carleman estimate of the harmonic measure (see, for example, \cite[IX.E1]{K}). 
Let $G$ be a simply connected domain such that  
$\infty\in\partial G$. Fix $z_0\in G$. 
Given $\rho>0$ we denote by $G_\rho$ the connected component 
of the intersection of the disc $\rho\mathbb D=\{z:|z|<\rho\}$ 
and $G$ containing $z_0$, 
$S_{\rho}$ is an arc on $\partial(\rho\mathbb D)\cap G$ 
separating $z_0$ from one of the unbounded components of 
$G\setminus \overline{\rho\mathbb D}$, $s(\rho)$ is the length of $S_\rho$. Then
$$
\omega(z_0,S_\rho,G_\rho)\le C\exp\Bigl(-\pi\int_0^\rho\frac{dr}{s(r)}\Bigr)
$$
for an absolute constant; here $\omega(z_0,\cdot,\Omega)$
is the harmonic measure with respect to $z\in\Omega$ on the boundary of $\Omega$. 

\begin{coro}\label{ty}
Suppose that $G$ is as above, a function $f$ is analytic in $G$, continuous up to 
$\partial G\setminus\{\infty\}$ and satisfies the conditions
\begin{gather*}
|f(z)|\le 1, \qquad z\in\partial G,\\   
\liminf_{\rho\to\infty}\frac{\log M(\rho)}{\sigma(\rho)}=0,
\end{gather*}
where
$$
M(\rho)=\max_{z\in S_\rho}|f(z)|,\qquad 
\sigma(\rho)=\exp\Big\{\pi\int_1^\rho\frac{dr}{s(r)}\Big\}.
$$
Then $|f(z)|\le 1$, $z\in G$.
\end{coro}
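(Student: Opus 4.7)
The plan is to apply the two-constants theorem on the exhausting subdomains $G_\rho$ and let $\rho\to\infty$, using the Ahlfors--Carleman estimate to control $\omega(z,S_\rho,G_\rho)$.

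Fix an arbitrary $z\in G$, and choose $\rho_0$ so that $z\in G_\rho$ for all $\rho\ge\rho_0$. The boundary $\partial G_\rho$ decomposes into a part lying on $\partial G$, where $|f|\le 1$ by hypothesis, and finitely many arcs on $\rho\mathbb{T}$. In the typical geometric setting where $G$ has a single unbounded end (as in the applications later in the paper), $S_\rho$ is the only such arc, placing us in the standard two-constants situation: $|f|\le 1$ on $\partial G_\rho\setminus S_\rho$ and $|f|\le M(\rho)$ on $S_\rho$. Any additional arc that bounds a bounded component of $G\setminus\overline{\rho\mathbb{D}}$ is dealt with by an inner application of the maximum principle inside that component, so it contributes only the bound $|f|\le 1$ and can be absorbed into the $\partial G$-part.

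Granting this, the two-constants theorem yields
$$
\log|f(z)|\le\omega(z,S_\rho,G_\rho)\cdot\log^+ M(\rho).
$$
By Harnack's inequality applied to $\omega(\cdot,S_\rho,G_\rho)$ on a fixed compact neighborhood of $\{z,z_0\}$ (which lies in $G_\rho$ for $\rho$ large), there is a constant $C(z)$ with $\omega(z,S_\rho,G_\rho)\le C(z)\,\omega(z_0,S_\rho,G_\rho)$. The Ahlfors--Carleman estimate recalled before the corollary gives $\omega(z_0,S_\rho,G_\rho)\le C'/\sigma(\rho)$, the difference between the integrals $\int_0^\rho$ and $\int_1^\rho$ in the exponent producing only a multiplicative constant. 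Combining,
$$
\log|f(z)|\le\frac{C''(z)\cdot\log^+ M(\rho)}{\sigma(\rho)}.
$$
Taking $\liminf_{\rho\to\infty}$ and using the growth hypothesis, the right-hand side is $\le 0$, hence $|f(z)|\le 1$. As $z\in G$ was arbitrary, the claim follows.

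The principal obstacle I expect is not analytic but geometric: handling arcs of $\partial G_\rho\cap\rho\mathbb{T}$ other than $S_\rho$. Bounded components of $G\setminus\overline{\rho\mathbb{D}}$ are routine, but if $G$ had several unbounded ends one would need a parallel hypothesis controlling $f$ in each direction; in the statement as given, the phrase ``separating $z_0$ from one of the unbounded components'' suggests that in the applications a single distinguished direction suffices, which is what makes the two-constants step clean.
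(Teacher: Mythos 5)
Your proof is correct and is exactly the argument the authors intend: the paper states this corollary without proof, as an immediate consequence of the Ahlfors--Carleman estimate, and your combination of the two-constants theorem on $G_\rho$ with that estimate, Harnack's inequality, and a $\liminf$ along a suitable sequence $\rho_n\to\infty$ is the standard route. The only weak point is the side remark that additional circular arcs bounding bounded components of $G\setminus\overline{\rho\mathbb{D}}$ can be ``absorbed'' by an inner maximum principle --- as stated this is circular, since the maximum of $|f|$ over such a component may a priori be attained on its arcs of $\rho\mathbb{T}$ rather than on $\partial G$ --- but, as you yourself note, this situation never arises for the domains $G_\phi$ and $F(\Omega_\Lambda)$ to which the corollary is applied, where $\rho\mathbb{T}\cap G$ is a single crosscut and $\partial G_\rho\setminus S_\rho\subset\partial G$.
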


Given an increasing differentiable function $\phi:[0,+\infty[\to\RR_{+}$ such that
\begin{equation} 
\lim_{x\to\infty}\frac{\phi(x)}{x}=\infty,
\label{y3}
\end{equation} 
we consider the domain $G_\phi$ defined by:
$$
G_\phi:=\big\{x+iy\in\CC : |y|\le \phi(x),\,x\ge 0 \big\}.
$$

\begin{proposition}
Let $f$ be a function analytic on $G_\phi$ and continuous up to 
$\partial G_\phi\setminus\{\infty\}$ such that for some $c>0$ we have
\begin{gather*}
|f(z)|\le e^{c|z|},\qquad z\in G_{\phi},\\
|f(\xi)|\le 1,  \qquad \xi\in \partial G_\phi\setminus\{\infty\}.
\end{gather*}
If 
\begin{equation}
\int^\infty\frac{x\phi'(x)\,dx}{\phi(x)^2}=+\infty,
\label{y2}
\end{equation} 
then $|f(z)|\le 1$, $z\in G_\phi$.
\label{y10}
\end{proposition}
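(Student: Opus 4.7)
The plan is to apply Corollary \ref{ty} with $G=G_\phi$ and a fixed interior reference point, say $z_0=0$. Since $\phi$ is increasing with $\phi(x)/x\to\infty$, for every sufficiently large $\rho$ the two boundary curves $y=\pm\phi(x)$ meet the circle $|z|=\rho$ in exactly two points, so $S_\rho$ is the arc $\{\rho e^{i\theta}:|\theta|\le\theta_+(\rho)\}$ where $\theta_+(\rho)\in(0,\pi/2)$ is determined by $\rho\sin\theta_+(\rho)=\phi(\rho\cos\theta_+(\rho))$. Setting $x_+(\rho)=\rho\cos\theta_+(\rho)$, one gets $x_+(\rho)^2+\phi(x_+(\rho))^2=\rho^2$, and the growth hypothesis on $\phi$ forces $x_+(\rho)/\phi(x_+(\rho))\to 0$, hence $\theta_+(\rho)\to\pi/2$. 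Thus $s(\rho)=2\rho\theta_+(\rho)=\pi\rho(1-o(1))$, which only yields $\sigma(\rho)\geqsim\rho$, just insufficient to beat the $e^{c|z|}$ growth of $f$ unless a sharper correction is extracted.

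To do so, write $\pi/2-\theta_+(\rho)=\arctan(x_+(\rho)/\phi(x_+(\rho)))$, so that for $\rho$ large
$$
\frac{1}{s(\rho)}-\frac{1}{\pi\rho}=\frac{\pi-2\theta_+(\rho)}{\pi\, s(\rho)}\sim\frac{2}{\pi^2\rho}\cdot\frac{x_+(\rho)}{\phi(x_+(\rho))}.
$$
Consequently $\sigma(\rho)/\rho$ is, up to a bounded factor, equal to $\exp\bigl(\pi\int_1^\rho (1/s(r)-1/(\pi r))\,dr\bigr)$, and the problem reduces to showing that $\int^\infty x_+(r)/(r\phi(x_+(r)))\,dr=\infty$. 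Parameterising the upper boundary curve by $r(x)=\sqrt{x^2+\phi(x)^2}$, so that $dr=(x+\phi(x)\phi'(x))/r\,dx$, and using $r(x)\sim\phi(x)$ for $x$ large, the integral becomes, up to a positive multiplicative constant,
$$
\int^{\infty}\frac{x(x+\phi(x)\phi'(x))}{(x^2+\phi(x)^2)\phi(x)}\,dx\sim\int^{\infty}\frac{x\phi'(x)}{\phi(x)^2}\,dx+\int^{\infty}\frac{x^2}{\phi(x)^3}\,dx,
$$
which diverges by hypothesis \eqref{y2} (the second summand is nonnegative, so it can only help).

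Combining these facts, $\sigma(\rho)/\rho\to\infty$; since $\log M(\rho)\le c\rho$ from the assumed exponential-type growth of $f$, we get $\liminf_{\rho\to\infty}\log M(\rho)/\sigma(\rho)=0$, and Corollary \ref{ty} concludes that $|f|\le 1$ on $G_\phi$. The main technical obstacle is the sharp analysis of $s(\rho)$: the leading term $\pi\rho$ is useless, so one must identify the first-order correction $\pi\rho-s(\rho)\sim 2\rho\, x_+(\rho)/\phi(x_+(\rho))$ and then translate the resulting $r$-integral into an $x$-integral via the boundary parameterisation, after which \eqref{y2} appears cleanly as the dominant divergent contribution.
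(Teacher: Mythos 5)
Your proof is correct and follows essentially the same route as the paper: both compute $s(\rho)=\rho(\pi-2\arctan(x/\phi(x)))$ with $x$ given by $x^2+\phi(x)^2=\rho^2$, extract the correction $\pi/s(r)-1/r\asymp x/(r\phi(x))$, and convert the $r$-integral to an $x$-integral via the boundary parametrisation $dr=(x+\phi\phi')\,dx/r$ so that \eqref{y2} gives $\sigma(\rho)/\rho\to\infty$, which beats $\log M(\rho)\le c\rho$ in Corollary~\ref{ty}. The only cosmetic difference is that the paper discards the nonnegative $x\,dx/r$ part of $dr$ and works with one-sided inequalities, whereas you keep both terms and observe the extra one only helps.
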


\begin{proof} In the notations of Corollary~\ref{ty}, we have 
$\log M(\rho)\le c\rho$.

A simple geometric argument shows that if 
\begin{equation}
r^2=x^2+\phi(x)^2,
\label{y1}
\end{equation} 
then
$$
s(r)=r\Bigl(\pi-2\arctan\frac{x}{\phi(x)}\Bigr).
$$
Therefore, if $x=x(r)$ is defined by \eqref{y1}, then 
$$
\frac{\pi}{s(r)}-\frac{1}{r}\ge \frac{x}{2r\phi(x)}\ge \frac{x}{3\phi(x)^2}
$$
for large $r$ (we use \eqref{y3}). Moreover, 
$$
dr=\frac{x+\phi(x)\phi'(x)}{r}\,dx\ge \frac{\phi'(x)\,dx}{2}
$$
for large $r$.

Thus, for some $c>0$ we have 
$$
\sigma(\rho)\ge c\rho\cdot\exp\Bigl[\frac16\int_{x(1)}^{x(\rho)}
\frac{x\phi'(x)\,dx}{\phi(x)^2}\Bigr].
$$
Therefore, \eqref{y2} implies that
$$
\lim_{\rho\to \infty}\frac{\log M(\rho)}{\sigma(\rho)}=0,
$$
and it remains to apply Corollary~\ref{ty}.
\end{proof}

By the standard maximum principle, it suffices to require in Proposition~\ref{y10} that 
$\phi(x)$ is increasing for large $x$. 
\smallskip

Given a positive decreasing differentiable function $\Lambda$ 
on $(0,1]$ such that $\Lambda(0^{+})=+\infty$ and 
\begin{equation}
t\Lambda(t)=o(1), \qquad t\to 0,
\label{y8}
\end{equation} 
we consider the domain  
$$
\Omega_\Lambda=\Big\{w\in\DD : \frac{1-|w|^{2}}{|1-w|^{2}}\ge\Lambda(1-|w|^2)\Big\}.
$$
Let $F(w)=\frac{1+w}{1-w}$ be a conformal map of the unit disc onto 
the right half  plane, and let $x+iy$ be a point on the boundary of $F(\Omega_\Lambda)$, $y\ge 0$.  
Then 
$$
x=\Lambda\Bigl(\frac{4x}{(x+1)^2+y^2}\Bigr).
$$
By monotonicity of $\Lambda$, $y$ is determined uniquely by $x$, $y=y(x)$, and we have 
$F(\Omega_\Lambda)=G_y$.
Furthermore, we obtain that
$$
\frac{4x^2}{(x+1)^2+y^2}=\frac{4x}{(x+1)^2+y^2}\cdot \Lambda\Bigl(\frac{4x}{(x+1)^2+y^2}\Bigr)=o(1),\qquad x\to\infty.
$$
Hence $x=o(y)$, $x\to\infty$. 

Next, for sufficiently small positive $t$ we have 
$$
\frac{4\Lambda(t)}{(\Lambda(t)+1)^2+y(\Lambda(t))^2}=t,
$$
and, hence, $y$ is differentiable and 
$$
2\frac{t\Lambda'(t)-\Lambda(t)}{t^2}=\Lambda'(t)\bigl(\Lambda(t)+1+y(\Lambda(t))y'(\Lambda(t))\bigr).
$$
Since $t\Lambda(t)=o(1)$ and $y(\Lambda(t))=(2+o(1))\sqrt{\Lambda(t)/t}$, $t\to 0$, 
we obtain
\begin{equation}
\Lambda'(t)y'(\Lambda(t))=(1+o(1))\frac{t\Lambda'(t)-\Lambda(t)}{t\sqrt{t\Lambda(t)}}, \qquad t\to 0.
\label{y5}
\end{equation}
In particular, the function $y(x)$ increases for large $x$.

Finally, we estimate the integral
$$
I=\int^\infty\frac{xy'(x)}{y(x)^2}\,dx.
$$
By \eqref{y5} we have
$$
I\ge c+\int_0\frac{t}{5}\cdot\frac{|t\Lambda'(t)-\Lambda(t)|}{t\sqrt{t\Lambda(t)}}
\,dt
=c+\frac{1}{5}\int_{0}\sqrt{\frac{\Lambda(t)}{t}}\,dt+\frac{1}{5}\int_{0}|\Lambda'(t)|
\sqrt{\frac{t}{\Lambda(t)}}\,dt.
$$
Integrating by parts and using \eqref{y8}, one can easily verify that the integrals 
$\int_0\sqrt{\Lambda(t)/t}\,dt$ and $\int_0|\Lambda'(t)|\sqrt{t/\Lambda(t)}\,dt$
converge simultaneously.
Thus, $I=\infty$ if and only if \eqref{y9} holds.

\begin{coro}\label{coro1}
Let $f$ be analytic on the domain $\Omega_{\Lambda}$ and continuous up to 
$\partial\Omega_\Lambda\setminus\{1\}$, where $\Lambda$ 
is a positive decreasing differentiable function  
on $(0,1]$ satisfying $\Lambda(0^{+})=+\infty$ and \eqref{y8}.
If for some $c>0$ we have
\begin{enumerate}
\item[(a)] $|f(w)|\le e^{c\frac{1-|w|^{2}}{|1-w|^{2}}}$, $w\in\Omega_\Lambda$,
\item[(b)] $|f(\xi)|\le 1$, $\xi\in\partial\Omega_{\Lambda}\setminus\{1\}$,
\end{enumerate}
and if 
$$
\int_0\sqrt{\frac{\Lambda(t)}{t}}\,dt=\infty,
$$
then $|f(z)|\le 1$, $z\in\Omega_\Lambda$.
\end{coro}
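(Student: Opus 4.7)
The plan is to transport the problem to the right half-plane via the conformal map $F(w)=(1+w)/(1-w)$, which sends $\mathbb{D}$ to $\{\re z>0\}$ and the singular boundary point $1$ to $\infty$, and then to invoke Proposition~\ref{y10}. Setting $\tilde f=f\circ F^{-1}$, the image $F(\Omega_\Lambda)$ is precisely the domain $G_{y}$ with the function $y(x)$ constructed in the discussion preceding the corollary; this identification and the regularity of $y$ (differentiability, monotonicity for large $x$, and $x=o(y(x))$ as $x\to\infty$) have already been established there.

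Next I would check the two hypotheses of Proposition~\ref{y10} for $\tilde f$ on $G_y$. Since $\re F(w)=(1-|w|^{2})/|1-w|^{2}$, the growth assumption (a) on $f$ becomes $|\tilde f(z)|\le e^{c\,\re z}\le e^{c|z|}$ on $G_y$, which is the required exponential-type bound. The boundary condition (b) translates, via continuity of $F$ on $\overline{\mathbb D}\setminus\{1\}$, to $|\tilde f(\xi)|\le 1$ for $\xi\in\partial G_y\setminus\{\infty\}$. The only technicality is that $y$ need only be eventually increasing, but the remark following Proposition~\ref{y10} precisely allows this.

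It remains to verify condition \eqref{y2} for $\phi=y$, namely
\[
\int^{\infty}\frac{x\,y'(x)}{y(x)^{2}}\,dx=+\infty.
\]
This is exactly the integral $I$ computed in the discussion preceding the corollary: using the identity \eqref{y5} and the change of variables $x=\Lambda(t)$, the divergence of $I$ was shown to be equivalent, via the integration by parts argument noted in the text, to the divergence of $\int_{0}\sqrt{\Lambda(t)/t}\,dt$. Under our hypothesis this integral is infinite, so \eqref{y2} holds.

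With all hypotheses verified, Proposition~\ref{y10} applies and yields $|\tilde f(z)|\le 1$ throughout $G_y$, which pulls back to $|f(w)|\le 1$ on $\Omega_\Lambda$. The main subtlety, already handled above, is the passage from the abstract geometric condition \eqref{y2} to the analytic condition \eqref{y9}; this is precisely why the preparatory computation of $y'(\Lambda(t))$ in \eqref{y5} and the estimate $y(\Lambda(t))\sim 2\sqrt{\Lambda(t)/t}$ were carried out before stating the corollary, making the proof itself essentially a translation exercise.
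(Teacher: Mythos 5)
Your proposal is correct and follows exactly the route the paper intends: the corollary is stated without a separate proof precisely because the preceding discussion (the identification $F(\Omega_\Lambda)=G_y$, the eventual monotonicity of $y$, the identity $\Re F(w)=\frac{1-|w|^2}{|1-w|^2}$, and the equivalence of the divergence of $I=\int^\infty x y'(x)y(x)^{-2}\,dx$ with \eqref{y9}) is the proof, and you have assembled these pieces correctly, including the needed appeal to the remark after Proposition~\ref{y10} that $\phi$ need only be increasing for large $x$.
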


\section{Auxiliary estimates}
\label{s3}

Given $\lambda\in\DD$, we define the Privalov shadow $I_\lambda$,
$$
I_\lambda=\bigl\{e^{i\theta}:|\arg(\lambda e^{-i\theta})|<\frac{1-|\lambda|}2\bigr\} 
$$
(see Figure \ref{fig:privalov}), which is an arc of the circle $\TT$ centered at 
$\frac{\lambda}{|\lambda|}$ (the radial projection of $\lambda$ onto the unit circle $\TT$) of length $|I_\lambda|=1-|\lambda|$. 
Furthermore, we consider an auxiliary function
$$
f_\lambda(z)=\exp\Big\{c_{\lambda}\frac{1-|\lambda|^{2}}{|1-\lambda|^{2}}\int_{I_\lambda}\frac{e^{i\theta}+z}{e^{i\theta}-z}\,d\theta\Big\},
$$
where
$$
c_{\lambda}^{-1}=\int_{I_\lambda}\frac{1-|\lambda|^{2}}{|e^{i\theta}-\lambda|^{2}}
\,d\theta.
$$
\begin{center}
\begin{figure}[h]
\includegraphics[width=0.4\linewidth]{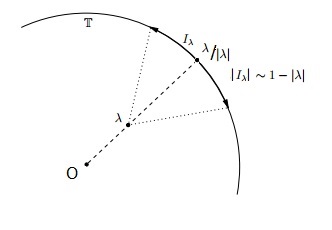}
\caption{\label{fig:privalov}Privalov shadow.}
\end{figure}
\end{center}
Then
$$
|f_\lambda(\lambda)S(\lambda)|=1,
$$
where $S(z)=\exp[-(1+z)/(1-z)]$.
Next, a geometric argument shows that 
$$
c_\lambda^{-1}\ge \frac{1-|\lambda|^2}{5/4(1-|\lambda|)^2}\cdot(1-|\lambda|)\ge 
\frac45,
$$
and we have 
$$
\sup_\DD|f_\lambda|\le \exp\Big(\frac{5\pi}2\cdot\frac{1-|\lambda|^2}{|1-\lambda|^2}\Big).
$$

Given $a>0$, $A>1$, we consider the domain 
$\Gamma_{\Lambda,\T}(a,A)$ defined by 
$$
\Gamma_{\Lambda,\T}(a,A)=\Big\{\lambda\in\DD:\frac{1-|\lambda|^2}{|1-\lambda|^2}\le 
a\Lambda(A(1-|\lambda|))\Big\}.
$$

Now for some $a,A$ we establish the following estimate: 

\begin{lem}\label{y11}
\begin{equation}
\sup_{\lambda\in\Gamma_{\Lambda,\T}(a,A)}\|f_\lambda S\|_{\Lambda}<\infty.
\label{y11t}
\end{equation}
\end{lem}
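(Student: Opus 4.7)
The strategy is to split $\D$ into two regions based on whether $1-|z|$ is smaller or larger than $A\rho$, where $\rho := 1-|\lambda|$, and handle each by a separate estimate. Throughout, let $\beta := |1-\lambda|$ and $\lambda_0 := \lambda/|\lambda|$. The defining inequality of $\Gamma_{\Lambda,\T}(a,A)$ yields $\rho/\beta^2 \le a\Lambda(A\rho)$, which I will combine with the sup bound established just before the lemma,
\[
|f_\lambda(z)| \le \exp\bigl(\tfrac{5\pi}{2}\cdot\tfrac{1-|\lambda|^2}{|1-\lambda|^2}\bigr) \le \exp\bigl(\tfrac{5\pi a}{2}\Lambda(A\rho)\bigr).
\]
The parameters $a>0$ and $A>0$ will be chosen small and large respectively.

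In the boundary region $\{1-|z|\le A\rho\}$, monotonicity of $\Lambda$ gives $\Lambda(A\rho)\le\Lambda(1-|z|)$, so the sup bound yields $|f_\lambda(z)|\le\exp\bigl(\tfrac{5\pi a}{2}\Lambda(1-|z|)\bigr)$. Taking $a\le 2/(5\pi)$ and using $|S|\le 1$, I obtain $|f_\lambda(z)S(z)|e^{-\Lambda(1-|z|)}\le 1$ on this region.

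In the interior region $\{1-|z|>A\rho\}$, I will show $|f_\lambda(z)S(z)|\le 1$, which suffices since $e^{\Lambda(1-|z|)}\ge 1$. Since $|z-\lambda_0|\ge 1-|z|>A\rho\ge\rho$, each $e^{i\theta}\in I_\lambda$ satisfies $|e^{i\theta}-z|\ge|z-\lambda_0|/2$, and evaluating the Poisson integral for the outer function (using $c_\lambda\le 5/4$ and $1-|\lambda|^2\le 2\rho$) produces
\[
\log|f_\lambda(z)| \le \frac{10\rho^2(1-|z|^2)}{\beta^2|z-\lambda_0|^2}.
\]
Combined with $\log|S(z)|=-(1-|z|^2)/|1-z|^2$, the desired inequality $|f_\lambda(z)S(z)|\le 1$ reduces to $|z-\lambda_0|^2\ge 10\rho^2|1-z|^2/\beta^2$. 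I verify this by splitting on the size of $|1-z|$: if $|1-z|\le A\beta/\sqrt{10}$, then $10\rho^2|1-z|^2/\beta^2\le A^2\rho^2\le(1-|z|)^2\le|z-\lambda_0|^2$; if $|1-z|>A\beta/\sqrt{10}$, choosing $A\ge 4\sqrt{10}$ and using $|1-\lambda_0|\le\beta+\rho\le 2\beta$ gives $|1-z|\ge 2|1-\lambda_0|$, hence $|z-\lambda_0|\ge|1-z|/2$, and the required bound becomes $\beta^2\ge 40\rho^2$.

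The main obstacle is ensuring $\beta^2\ge 40\rho^2$ in this last sub-case, and this is where hypothesis \eqref{y8} is essential. From $\rho/\beta^2\le a\Lambda(A\rho)$ one derives
\[
\frac{\beta^2}{\rho^2} \ge \frac{A}{a\cdot A\rho\Lambda(A\rho)},
\]
and since \eqref{y8} says $A\rho\Lambda(A\rho)\to 0$ as $\rho\to 0$, the right-hand side tends to infinity. Fixing $\rho_0$ so that $A\rho\Lambda(A\rho)\le A/(40a)$ for all $\rho<\rho_0$ gives $\beta^2\ge 40\rho^2$ for every $\lambda\in\Gamma_{\Lambda,\T}(a,A)$ with $\rho<\rho_0$. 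For $\rho\ge\rho_0$ the point $\lambda$ lies in a compact subset of $\D$ on which $|f_\lambda|$ and $|S|$ are uniformly bounded, concluding the proof.
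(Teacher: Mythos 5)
Your proof is correct and uses essentially the same strategy as the paper's: near the boundary ($1-|z|\le A\rho$) the crude sup bound on $|f_\lambda|$ plus monotonicity of $\Lambda$ absorbs the weight for $a\le 2/(5\pi)$, while in the interior the decay of the Poisson kernel away from $I_\lambda$ is played off against $\log|S|$, with hypothesis \eqref{y8} (i.e.\ $t\Lambda(t)=o(1)$) invoked exactly once to control the regime where $|1-z|\gtrsim|1-\lambda|$. The case boundaries are drawn slightly differently (you measure distances to the radial projection $\lambda_0$ and reduce the far sub-case to $\beta^2\ge 40\rho^2$, whereas the paper splits on $|1-z|\ge 6|1-\lambda|$ and bounds $H_\lambda$ directly), but the ingredients and the role of each hypothesis coincide.
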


\begin{proof} We set
$$
H_\lambda(z)=c_\lambda\,\frac{1-|\lambda|^{2}}{|1-\lambda|^{2}}\int_{I_\lambda}\frac{1-|z|^2}{|e^{i\theta}-z|^2}\,d\theta-\frac{1-|z|^2}{|1-z|^2}-\Lambda(1-|z|),
$$
and obtain
$$
|f_\lambda(z)S(z)|e^{-\Lambda(1-|z|)}=e^{H_\lambda(z)}.
$$
Thus, it remains to verify that
$$
\sup_{\lambda\in\Gamma_{\Lambda,\T}(a,A),\, z\in\DD}H_\lambda(z)<\infty.
$$ 
\smallskip

{\bf Case 1:} If $A(1-|\lambda|)\ge 1-|z|$, then  
\begin{multline*}
H_\lambda(z)\le 2\pi c_\lambda\,\frac{1-|\lambda|^2}{|1-\lambda|^2}-\Lambda(1-|z|)\\ 
\le \frac{5\pi a}2\,\Lambda(A(1-|\lambda|))-\Lambda(A(1-|\lambda|))=\Bigl(\frac{5\pi a}2-1\Bigr)\Lambda(A(1-|\lambda|)).
\end{multline*}
Therefore, for $a\le\frac{2}{5\pi}$ we obtain that $H_\lambda(z)\le 0$. 
\smallskip

{\bf Case 2:} If $|1-z|\ge 6|1-\lambda|$, then 
for every $e^{i\theta}\in I(\lambda)$ we have 
\begin{multline*}
|z-e^{i\theta}|\ge|1-z|-|1-\lambda|-|\lambda-e^{i\theta}|
\ge |1-z|-|1-\lambda|-2(1-|\lambda|)
\\ \ge|1-z|-3|1-\lambda|
\ge \frac12|1-z|.
\end{multline*}
Therefore,
\begin{multline*}
H_\lambda(z)\le 5\frac{1-|\lambda|^2}{|1-\lambda|^2}\frac{1-|z|^2}{|1-z|^2}(1-|\lambda|)-\frac{1-|z|^2}{|1-z|^2}
\le \Big[5 a(1-|\lambda|)\Lambda(A(1-|\lambda|))-1\Big]\frac{1-|z|^2}{|1-z|^2}.
\end{multline*}
Since $t\Lambda(t)=o(1)$, $t\to 0,$ we obtain that $H_\lambda(z)$ is uniformly bounded.
\smallskip

{\bf Case 3:} If $A(1-|\lambda|)<1-|z|$ and $|1-z|<6|1-\lambda|$, then
$$
\frac{1-|z|^{2}}{|1-z|^{2}}\geq\frac{A}{72}\cdot \frac{1-|\lambda|^2}{|1-\lambda|^2},
$$
and hence,
$$
H_\lambda(z)\le \frac{180\pi}{A} \cdot\frac{1-|z|^2}{|1-z|^2}-\frac{1-|z|^2}{|1-z|^2}\le 0,
$$  
for $A\ge 1000$.
\end{proof}

Now, let $E$ be an arbitrary compact subset of $\T$. 
For some $a>0$, $A>1$, we consider the domain 
$\Gamma_{\Lambda,E}(a,A)$ defined by 
$$
\Gamma_{\Lambda,E}(a,A)=\Big\{\lambda\in\DD:\frac{1-|\lambda|^2}{|1-\lambda|^2}\le 
a\Lambda(A\dist(\lambda,E))\Big\},
$$
and obtain the estimate

\begin{lem}
$$
\sup_{\lambda\in\Gamma_{\Lambda,E}(a,A)}\|f_\lambda S\|_{\Lambda}<\infty.
$$
\label{y11bis}
\end{lem}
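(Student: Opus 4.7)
The plan is to repeat the three-case argument of Lemma~\ref{y11}, tracking the replacement of $1-|\cdot|$ by $\dist(\cdot,E)$ in both the defining inequality of $\Gamma_{\Lambda,E}(a,A)$ and in the weight $e^{-\Lambda(\dist(z,E))}$ inherent in the $\mathcal{B}_{\Lambda,E}^{\infty}$-norm. Writing $|f_\lambda(z)S(z)|e^{-\Lambda(\dist(z,E))}=e^{H^E_\lambda(z)}$ with
\[
H^E_\lambda(z)=c_\lambda\frac{1-|\lambda|^2}{|1-\lambda|^2}\int_{I_\lambda}\frac{1-|z|^2}{|e^{i\theta}-z|^2}\,d\theta-\frac{1-|z|^2}{|1-z|^2}-\Lambda(\dist(z,E)),
\]
the aim is to bound $\sup_{z\in\DD}H^E_\lambda(z)$ uniformly in $\lambda\in\Gamma_{\Lambda,E}(a,A)$. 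The enabling elementary fact is $\dist(\lambda,E)\ge 1-|\lambda|$ for every $E\subseteq\T$, which tightens the constraint $\Gamma_{\Lambda,E}(a,A)\subseteq\Gamma_{\Lambda,\T}(a,A)$ and keeps the relation $(1-|\lambda|)\Lambda(A\dist(\lambda,E))\le(1-|\lambda|)\Lambda(A(1-|\lambda|))=o(1)$, $1-|\lambda| \to 0$, available for all the estimates that previously exploited $t\Lambda(t)=o(1)$.

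Cases~1 and 2 carry over with minimal change. In Case~1, $A\dist(\lambda,E)\ge\dist(z,E)$, the monotonicity of $\Lambda$ gives $\Lambda(\dist(z,E))\ge\Lambda(A\dist(\lambda,E))$, so the crude Poisson bound $2\pi c_\lambda(1-|\lambda|^2)/|1-\lambda|^2\le(5\pi a/2)\Lambda(A\dist(\lambda,E))$ is dominated by the $\Lambda(\dist(z,E))$ term whenever $a\le 2/(5\pi)$. In Case~2, $|1-z|\ge 6|1-\lambda|$, the distance estimate $|z-e^{i\theta}|\ge|1-z|/2$ for $e^{i\theta}\in I_\lambda$ is unchanged, and the bracketed prefactor $5a(1-|\lambda|)\Lambda(A\dist(\lambda,E))-1$ in front of $(1-|z|^2)/|1-z|^2$ is uniformly controlled via the above comparison with $(1-|\lambda|)\Lambda(A(1-|\lambda|))=o(1)$.

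The main obstacle is Case~3, $A\dist(\lambda,E)<\dist(z,E)$ and $|1-z|<6|1-\lambda|$. The original proof produced a direct comparison $(1-|z|^2)/|1-z|^2\gtrsim A\cdot(1-|\lambda|^2)/|1-\lambda|^2$ from $A(1-|\lambda|)<1-|z|$, and this is no longer available because $\dist(\cdot,E)$ can be much larger than $1-|\cdot|$. The plan is to exploit instead that $z$ is forced far from the Privalov shadow $I_\lambda$: the triangle inequality $\dist(z,E)\le|z-\lambda|+\dist(\lambda,E)$ together with $\dist(z,E)>A\dist(\lambda,E)$ yields $|z-\lambda|>(A-1)\dist(\lambda,E)\ge(A-1)(1-|\lambda|)$, so for $A\ge 4$ the estimate $|e^{i\theta}-\lambda|\le\tfrac32(1-|\lambda|)$ on $I_\lambda$ gives $|z-e^{i\theta}|\ge|z-\lambda|/2$. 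Substituting into the Poisson integral, bounding $(1-|\lambda|^2)(1-|\lambda|)\le 2(1-|\lambda|)^2$, cancelling the $(1-|\lambda|)^2$ against $|z-\lambda|^2\ge(A-1)^2(1-|\lambda|)^2$, and finally invoking the case hypothesis $|1-\lambda|\ge|1-z|/6$, the Poisson part collapses to at most $C(A-1)^{-2}(1-|z|^2)/|1-z|^2$, which the $-(1-|z|^2)/|1-z|^2$ contribution absorbs once $A$ is taken large (e.g., $A\ge 1000$ as in Lemma~\ref{y11}). The bookkeeping between $1-|\lambda|$, which in this regime is forced to be much smaller than $|1-\lambda|$, and $|1-\lambda|$ itself is where the argument requires care.
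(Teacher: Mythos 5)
Your proof is correct and follows essentially the same three-case scheme as the paper; in particular your Case 3 bound $|z-e^{i\theta}|\gtrsim (A-1)(1-|\lambda|)$, obtained via the triangle inequality through $|z-\lambda|$, is just a cosmetic variant of the paper's estimate, which routes the same inequality through a point $e^{i\eta}\in E$ with $A|\lambda-e^{i\eta}|\le|z-e^{i\eta}|$ to get $|z-e^{i\theta}|\ge\frac{A-3}{2}(1-|\lambda|)$. Both then absorb the Poisson term into $-\frac{1-|z|^2}{|1-z|^2}$ using $|1-z|<6|1-\lambda|$ for $A$ large, exactly as you describe.
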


\begin{proof} We just need to verify that  
$$
\sup_{\lambda\in\Gamma_{\Lambda,E}(a,A),\, z\in\DD}H_\lambda(z)<\infty.
$$ 

In the cases (1) $A\dist(\lambda,E)\ge \dist(z,E)$ and (2) $|1-z|\ge 6|1-\lambda|$
we use the same argument as in the proof of Lemma~\ref{y11}. In the case (3) we have
$A\dist(\lambda,E)<\dist(z,E)$ and $|1-z|<6|1-\lambda|$. Take 
$e^{i\eta}\in E$ such that $A|\lambda-e^{i\eta}|\le |z-e^{i\eta}|$ and use that for 
$e^{i\theta}\in I_{\lambda}$ we have 
$$
1-|\lambda|\le |\lambda-e^{i\theta}|\le 2 |\lambda-e^{i\eta}|.
$$
Then,
$$
|z-e^{i\theta}|\ge |z-e^{i\eta}|-|\lambda-e^{i\theta}|-|\lambda-e^{i\eta}|\ge 
(A-3)|\lambda-e^{i\eta}|\ge \frac{A-3}{2}(1-|\lambda|),\qquad e^{i\theta}\in E.
$$
Therefore,
$$
H_\lambda(z)\le \frac{5}{4}\frac{1-|\lambda|^2}{|1-\lambda|^2}\frac{4}{(A-3)^2}\frac{1-|z|^2}{|1-z|^2}(1-|\lambda|)-\frac{1-|z|^2}{|1-z|^2}\le
\Bigl(\frac{360}{(A-3)^2}-1\Bigr)\frac{1-|z|^2}{|1-z|^2}\le 0
$$  
for $A\ge 100$.
\end{proof}

\section{Proofs of Theorems~\ref{tem} and \ref{tembis}}
\label{s4}

\begin{proof}[Proof of Theorem~\ref{tem}] Suppose that $\Lambda$ is non-increasing 
and satisfies \eqref{y9}. Then $\Lambda(0^+)=\infty$, and slightly decreasing, if necessary, $\Lambda$, we can assume that $\Lambda$ is decreasing.

If $\int_0 \Lambda(t)\,dt=\infty$, then the result follows from 
\cite[Theorem 2]{EKS}. Therefore, from now on we assume that 
$\int_0\Lambda(t)\,dt<\infty$. Since $\Lambda$ decreases, we have
$$
\Lambda(t)\le \frac{1}{t}\int_0^t\Lambda(s)\,ds= o\Big(\frac 1t\Big),\qquad t\to 0,
$$
and, hence, $t\Lambda(t)=o(1)$, $t\to 0$. Finally, smoothing, if necessary, $\Lambda$, 
we can assume that $\Lambda$ is differentiable.

We denote by $\pi$ the canonical projection of $\mathcal{B}_\Lambda^{\infty,0}$ onto $\mathcal{B}_\Lambda^{\infty,0}/[S]$, and by $\alpha :z\mapsto z$ the identity map. 

Suppose that $1\not\in[S]$. Next, we estimate $\|(\lambda-\pi(\alpha))^{-1}\pi(1)\|$. Given $\lambda\in\DD$ and an analytic function $f$, 
we define the following function:
$$
L_{\lambda}(f)(z)=\left\{
\begin{array}{cll}
\dfrac{f(z)-f(\lambda)}{z-\lambda}   &if& z\neq \lambda \\ \\
f'(z)   &if& z=\lambda\,.
\end{array}
\right.
$$
For bounded functions $f$ we have
\begin{equation} 
\label{y14}
f(\lambda)S(\lambda)\pi(1)=(\lambda-\pi(\alpha))\pi(L_\lambda(fS)).
\end{equation}
In particular,
$$
S(\lambda)\pi(1)=(\lambda-\pi(\alpha))\pi(L_\lambda(S)),\qquad \lambda\in \mathbb C\setminus\{1\},
$$
and hence, the function $\lambda\mapsto(\lambda-\pi(\alpha))^{-1}\pi(1)$ is 
well defined and analytic in 
$\mathbb C\setminus\{1\}$.

Next we use that the function $Q(z)=\log|f_\lambda(z)S(z)|$ is the Poisson integral 
$$
Q(z)=\mathcal P(z,\theta)*d\mu(\theta)
$$
of a finite (signed) measure 
$$
d\mu(\theta)=2\pi c_\lambda\frac{1-|\lambda|^{2}}{|1-\lambda|^{2}}
\chi_{I_\lambda}(\theta)\,d\theta-\delta_1
$$
with mass 
$$
|\mu|(\T)=2\pi c_\lambda\frac{(1-|\lambda|^2)(1-|\lambda|)}{|1-\lambda|^{2}}+1
$$
uniformly bounded in $\lambda$. Since 
$$
\sup_{z\in\DD,\,\theta\in\T}(1-|z|)^2\|\nabla P(z,\theta)\|<\infty,
$$
for an absolute constant $c$ we obtain that 
$$
\|\nabla Q(z)\|\le \frac{c}{(1-|z|)^2}.
$$ 
Since $Q(\lambda)=0$, 
we have $|f_\lambda(z)S(z)|\le c_1$ at the boundary of the disc 
$D_\lambda=\{z:|z-\lambda|<(1-|\lambda|)^2/2\}$, and hence, by the maximum principle, 
$$
\Big|\frac{f_\lambda(z)S(z)-f_\lambda(\lambda)S(\lambda)}{z-\lambda}\Big|\le
\frac{2(c_1+1)}{(1-|\lambda|)^2},\qquad z\in D_\lambda.
$$

Therefore,
\begin{multline*}
\|L_\lambda(f_\lambda S)\|_\Lambda=\sup_{|z|<1}\Big|\frac{f_\lambda(z)S(z)-f_\lambda(\lambda)S(\lambda)}{z-\lambda}\Big|e^{-\Lambda(1-|z|)}\\
\le\sup_{|z-\lambda|\le(1-|\lambda|)^2/2}\Big|\frac{f_{\lambda}(z)S(z)-f_{\lambda}(\lambda)S(\lambda)}{z-\lambda}\Big|e^{-\Lambda(1-|z|)}\\
+\sup_{|z-\lambda|>(1-|\lambda|)^2/2}\Big|\frac{f_\lambda(z)S(z)-f_\lambda(\lambda)S(\lambda)}{z-\lambda}\Big|e^{-\Lambda(1-|z|)}\\
\le\frac{2}{(1-|\lambda|)^2}\big(\|f_{\lambda}S\|_{\Lambda}+c_1+2\big).
\end{multline*}

By \eqref{y14}, for every $\lambda\in\DD$ we have
\begin{equation}
\|(\lambda-\pi(\alpha))^{-1}\pi(1)\|_\Lambda\le\frac{1}{|f_\lambda(\lambda)S(\lambda)|} \|L_\lambda(f_\lambda S)\|_\Lambda\le \frac{2}{(1-|\lambda|)^2}\big(\|f_{\lambda}S\|_{\Lambda}+c_1+2\big). 
\label{y16}
\end{equation}

In the same way, by \eqref{y14}, for every $\lambda\in\DD$ we have
\begin{equation} 
\label{eq1}
\|(\lambda-\pi(\alpha))^{-1}\pi(1)\|_\Lambda\le \frac1{S(\lambda)}
\|\pi(L_\lambda(S))\|_\Lambda \le
\frac 2{1-|\lambda|}\exp\frac{1-|\lambda|^2}{|1-\lambda|^2}.
\end{equation}
Furthermore, for $|\lambda|>1$ we have
\begin{equation} 
\label{y19}
\|(\lambda-\pi(\alpha))^{-1}\pi(1)\|_{\Lambda}\le \frac1{|S(\lambda)|}\|\pi(L_\lambda(S))\|_\Lambda \le \frac2{|\lambda|-1}.
\end{equation}

Let us fix $a$ and $A$ such that \eqref{y11t} holds.
For $\lambda\in\partial\Gamma_\Lambda(a,A)\setminus\TT$ we have $1-|\lambda|^{2}= a\Lambda(A(1-|\lambda|))|1-\lambda|^{2}$, and hence, 
\begin{equation} 
\label{eq2}
\frac{1}{1-|\lambda|}\leq\frac{c}{|1-\lambda|^2},\qquad\lambda\in\partial\Gamma_\Lambda(a,A)\setminus\TT.
\end{equation}
By Lemma~\ref{y11} and \eqref{y16}, we have 
\begin{equation} 
\label{y18}
\sup_{\lambda\in\partial\Gamma_\Lambda(a,A)}\|(\lambda-1)^4(\lambda-\pi(\alpha))^{-1}\pi(1)\|<
\infty.
\end{equation}
Applying \eqref{eq1} and Corollary~\ref{coro1} (to $\Lambda_1$ defined 
by $\Lambda_1(1-r^2)=a\Lambda(A(1-r))$), we obtain that 
the function $(\lambda-1)^4(\lambda-\pi(\alpha))^{-1}\pi(1)$ is bounded on $\DD\setminus\Gamma_\Lambda(a,A)$.

By \eqref{eq2}, for some $c>0$ we have
$$
\dist(\zeta,\partial\Gamma_\Lambda(a,A)\setminus\TT)\ge c|1-\zeta|^2,\qquad \zeta\in\T.
$$
Applying Levinson's log-log theorem (see, for example, \cite[VII D7]{K}) or, rather, its polynomial growth version to the function $(\lambda-\pi(\alpha))^{-1}\pi(1)$ and using estimates \eqref{y18} and \eqref{y19} 
we obtain that 
\begin{equation} 
\label{y21}
\|(\lambda-\pi(\alpha))^{-1}\pi(1)\|\leq \frac{C}{|\lambda-1|^4}, \qquad  \lambda\in
2\DD\setminus(\{1\}\cup\DD\setminus\Gamma_\Lambda(a,A)).
\end{equation}
By \eqref{y18} and \eqref{y21}, the function $\lambda\mapsto (\lambda-1)^4(\lambda-\pi(\alpha))^{-1}\pi(1)$ is bounded on $2\DD\setminus\{1\}$.

Now, pick an arbitrary functional $\phi\bot [S]$, and consider the function
$$
\Phi(\lambda)=(\lambda-1)\langle(\lambda-\pi(\alpha))^{-1}\pi(1),\phi\rangle.
$$
The function $\Phi$ is analytic on $\CC\setminus\{1\}$ and has a pole of order at most $3$ at the point $1$. By \eqref{y19}, we have 
$$
\limsup_{\varepsilon\to 0^+}|\Phi(1+\varepsilon)|<\infty,
$$
and hence, $\Phi$ is an entire function. Again by \eqref{y19}, $\Phi$ is bounded 
and hence is a constant function.
Furthermore,
\begin{multline*}
\Phi(\lambda)=\langle(\lambda-\pi(\alpha))^{-1}\pi(\lambda-1),\phi\rangle=
\langle(\lambda-\pi(\alpha))^{-1}\pi(\lambda-\alpha),\phi\rangle+
\langle(\lambda-\pi(\alpha))^{-1}\pi(\alpha-1),\phi\rangle\\=
\langle(\pi(1),\phi\rangle+
\langle(\lambda-\pi(\alpha))^{-1}\pi(\alpha-1),\phi\rangle.
\end{multline*}
Arguing as in the proof of \eqref{y19}, we see that
$$
\lim_{|\lambda|\to\infty}(\lambda-\pi(\alpha))^{-1}\pi(\alpha-1)=0,
$$
and hence, $\langle\alpha-1,\phi\rangle=0$. Since $\phi$ is an arbitrary functional 
vanishing on $[S]$, we get $\alpha-1\in[S]$, $\alpha^n-1\in[S]$, $n\ge 1$, and hence, $1\in[S]$. This contradiction shows that $[S]=\mathcal{B}_\Lambda^{\infty,0}$.

In the opposite direction, suppose that $S$ is cyclic in 
$\mathcal{B}_\Lambda^{\infty,0}$. We replace $\Lambda$ by $\Lambda^*$, 
$$
\Lambda^*(t)=\Lambda(t)+\log\frac1t,
$$
and remark that $S$ is also cyclic in $\mathcal{B}_{\Lambda^*}^{\infty,0}$. Now we can apply the result by Nikolski \cite[Theorem 1a, Section 2.6]{NN} to conclude that 
$\Lambda^*$ satisfies \eqref{y9}.
\end{proof}

\begin{proof}[Proof of Theorem~\ref{tembis}]
It suffices to remark that 
$$
\mathcal{B}_\Lambda^{\infty,0}\subset\mathcal{B}_\Lambda^p\subset
\mathcal{B}_{\Lambda^*}^{\infty,0},
$$
where 
$$
\Lambda^*(t)=\Lambda(t/2)+\frac2p \log\frac1t,
$$ 
and to apply Theorem~\ref{tem}. We use here the subharmonicity of $|f|^p$ and the fact that $\Lambda$ and $\Lambda^*$ satisfy \eqref{y9} simultaneously.
\end{proof}

\begin{remq} 
\label{r6}
The same method works for the spaces $\mathcal{B}_{\Lambda,E}^p$, where $E$ is a closed arc of the unit circle. More precisely we have the following result:

Let $E$ be a non-trivial closed arc of the unit circle such $1\in E$, let $1\le p<\infty$, and let $\Lambda$ be a positive non-increasing continuous function on $(0,1]$. Then $S(z)=e^{-\frac{1+z}{1-z}}$ is cyclic in $\mathcal{B}_{\Lambda , E}^p$ if and only if $\Lambda$ satisfies \eqref{y9}.
\end{remq}

\noindent
\begin{proof}[Sketch of the proof:]
For the necessity part it suffices to use that $\mathcal{B}_{\Lambda , E}^p \subset \mathcal{B}_{\Lambda }^p$. In the opposite direction, suppose that  $\Lambda$ satisfies \eqref{y9} and let $E= {\overline{(e^{ia},e^{ib})}}$. If $1\in (e^{ia},e^{ib})$ then the same proof works. In the case $e^{ia}=1$ a slight modification is needed:
we replace $\Omega_\Lambda$ by
\begin{multline*}
\widetilde{\Omega}_\Lambda=\Bigl\{w\in\DD:\Im \,w\ge 0,\,
\frac{1-|w|^{2}}{|1-w|^{2}}\ge a\Lambda(A(1-|w|^2))\Bigr\}\\ \cup
\Bigl\{w\in\DD:\Im\,w\le 0,\,
\frac{1-|w|^{2}}{|1-w|^{2}}\ge a_1\Lambda(A_1|1-w|)\Bigr\}
\end{multline*}
for some $a,a_1,A,A_1$.
\end{proof} 

\section{An auxiliary domain for general $E$}
\label{s5}

Let $E$ be a compact subset of $\mathbb T$, $1\in E$. Given a positive decreasing $C^1$ smooth function 
$\Lambda$ on $(0,1)$ such that $\Lambda(1)<1/10$, and 
\begin{equation}
\lim_{t\to0}\Lambda(t)=\infty,\qquad 
\lim_{t\to0}t\Lambda(t)=0,\qquad 
t|\Lambda'(t)|=O(\Lambda(t)),\quad t\to 0,
\label{cond1}
\end{equation}
we consider the domain 
$$
\Omega_{\Lambda,E}=\{(1-s)e^{i\theta}:s\ge \theta^2\Lambda(s+\dist(e^{i\theta},E)),\,\theta\in(-\pi,\pi]\}.
$$
Clearly, $\Omega_{\Lambda,E}$ is star-shaped with respect to the origin, 
$$
\partial\Omega_{\Lambda,E}=\{(1-\gamma(\theta))e^{i\theta},\,\theta\in(-\pi,\pi]\}
$$
with 
\begin{equation}
\gamma(\theta)=\theta^2\Lambda(\gamma(\theta)+\dist(e^{i\theta},E)).
\label{ga1}
\end{equation}
Next, 
\begin{equation}
\inf_{\theta\in(-\pi,\pi]\setminus\{0\}}\frac{\gamma(\theta)}{\theta^2}>0, 
\label{dop1}
\end{equation}
$\lim_{\theta\to 0}\gamma(\theta)=0$, and
$$
\frac{\gamma(\theta)}{\theta}=
[\gamma(\theta)\Lambda(\gamma(\theta)+\dist(e^{i\theta},E))]^{1/2}
=o(1),\qquad \theta\to 0.
$$
Furthermore, the derivative $h(\theta)$ of $\dist(e^{i\theta},E)$ 
is equal to $\pm 1$ for a.e. $e^{i\theta}$ on $\mathbb T\setminus E$ and to $0$ for 
a.e. $e^{i\theta}$ on $E$. Therefore,
$$
\gamma'(\theta)=2\theta\Lambda(\gamma(\theta)+\dist(e^{i\theta},E))-\theta^2|\Lambda'(\gamma(\theta)+\dist(e^{i\theta},E))|(\gamma'(\theta)+h(\theta))
$$
for a.e. $e^{i\theta}\in\mathbb T$, 
and hence, by \eqref{cond1}, 
\begin{equation}
|\gamma'(\theta)|=O(1),\qquad \text{a.e.\ }\theta\to 0.
\label{condO}
\end{equation}
Now we set
$$
(1-\gamma(\theta))e^{i\theta}=1-\frac{e^{i\phi}}{R},
$$
with $\phi\in[-\pi/2,\pi/2]$.
Then
\begin{gather*}
R\asymp\frac1\theta,\qquad \frac\pi2-|\phi|\asymp \frac{\gamma(\theta)}{\theta},\\ \frac{dR}{d\theta}=(1+o(1))R^2,\qquad \theta\to 0.
\end{gather*}

By analogy with Proposition~\ref{y10} and Corollary~\ref{coro1} we have 
 
\begin{proposition}
Given a continuous function $\phi:\mathbb R_+\to(0,\pi/2)$ 
let 
$$
G_\phi:=\big\{Re^{i\theta}:|\theta|< \frac\pi2-\phi(R),\,R>0 \big\}.
$$
Let $f$ be a function analytic on $G_\phi$ and continuous up to 
$\partial G_\phi\setminus\{\infty\}$ such that for some $c>0$ we have
\begin{gather*}
|f(z)|\le e^{c|z|},\qquad z\in G_{\phi},\\
|f(\xi)|\le 1,  \qquad \xi\in \partial G_\phi\setminus\{\infty\}.
\end{gather*}
If 
\begin{equation}
\int^\infty\frac{\phi(R)\,dR}{R}=+\infty,
\label{y2nn}
\end{equation} 
then $|f(z)|\le 1$, $z\in G_\phi$.
\label{y10nn}
\end{proposition}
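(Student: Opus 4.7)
The plan is to mimic the proof of Proposition~\ref{y10}: reduce to Corollary~\ref{ty} applied to the domain $G_\phi$, identify the cross-sections $S_\rho$ explicitly using the polar description of $G_\phi$, and show that the divergence condition \eqref{y2nn} forces $\liminf_{\rho\to\infty}\log M(\rho)/\sigma(\rho)=0$.

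Concretely, I would fix $z_0$ to be a point in $G_\phi$ far from $\infty$, say $z_0=\rho_0>0$ for some $\rho_0$ large enough that $\phi(\rho_0)<\pi/2$ and small enough that $\rho_0<1$ in the parametrization (adjusting the lower limit of integration in $\sigma(\rho)$ as necessary). For every $\rho$ large enough, the intersection $G_\phi\cap\partial(\rho\DD)$ is a single arc
$$
S_\rho=\bigl\{\rho e^{i\theta}:|\theta|<\tfrac{\pi}{2}-\phi(\rho)\bigr\},
$$
and $G_\phi\setminus\overline{\rho\DD}$ is an unbounded connected region, so $S_\rho$ separates $z_0$ from that unbounded component. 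Its length is $s(\rho)=\rho\bigl(\pi-2\phi(\rho)\bigr)$.

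Next, I would estimate $\sigma(\rho)=\exp\bigl\{\pi\int_1^\rho dr/s(r)\bigr\}$. Writing
$$
\frac{\pi}{\pi-2\phi(r)}=1+\frac{2\phi(r)}{\pi-2\phi(r)}\ge 1+\frac{2\phi(r)}{\pi},
$$
and integrating $1/r$ times this inequality on $[1,\rho]$, I get
$$
\sigma(\rho)\ge \rho\cdot\exp\Bigl(\frac{2}{\pi}\int_1^\rho\frac{\phi(r)}{r}\,dr\Bigr).
$$
Combined with the hypothesis $|f(z)|\le e^{c|z|}$, which gives $\log M(\rho)\le c\rho$, this yields
$$
\frac{\log M(\rho)}{\sigma(\rho)}\le c\exp\Bigl(-\frac{2}{\pi}\int_1^\rho\frac{\phi(r)}{r}\,dr\Bigr).
$$
By \eqref{y2nn} the right-hand side tends to $0$ as $\rho\to\infty$, so $\liminf_{\rho\to\infty}\log M(\rho)/\sigma(\rho)=0$ and Corollary~\ref{ty} delivers $|f(z)|\le 1$ on $G_\phi$.

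The only delicate point is verifying the geometric hypotheses of Corollary~\ref{ty}, in particular that $S_\rho$ genuinely separates $z_0$ from an unbounded component of $G_\phi\setminus\overline{\rho\DD}$; but since $G_\phi$ is described in polar coordinates as a radially convex ``generalized sector'' pointing toward $\infty$, this is immediate for all $\rho$ larger than some $\rho_1$ depending on $z_0$ and $\phi$, and one can always shift the lower limit of integration in $\sigma(\rho)$ by a bounded amount without affecting the conclusion. No further regularity or monotonicity of $\phi$ is needed: only continuity and $\phi<\pi/2$ are used.
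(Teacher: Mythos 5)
Your proof is correct and follows essentially the same route the paper intends: Proposition~\ref{y10nn} is stated there without a written proof, ``by analogy with Proposition~\ref{y10}'', whose argument is precisely this application of the Ahlfors--Carleman estimate via Corollary~\ref{ty}. Your polar-coordinate identification $s(\rho)=\rho\bigl(\pi-2\phi(\rho)\bigr)$ and the resulting bound $\sigma(\rho)\ge\rho\exp\bigl(\frac{2}{\pi}\int_1^\rho\phi(r)\,dr/r\bigr)$ are the direct (and, since the domain is already given in polar form, simpler) analogues of the change-of-variables computation carried out in the proof of Proposition~\ref{y10}.
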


\begin{coro}
Let $f$ be analytic on the domain $\Omega_{\Lambda,E}$ and continuous up to 
$\partial\Omega_{\Lambda,E}\setminus\{1\}$, where $\Lambda$ 
is a positive decreasing differentiable function  
on $(0,1]$ satisfying \eqref{cond1}.
If for some $c>0$ we have
\begin{enumerate}
\item[(a)] $|f(w)|\le e^{c\frac{1-|w|^{2}}{|1-w|^{2}}}$, $w\in\Omega_{\Lambda,E}$,
\item[(b)] $|f(\xi)|\le 1$, $\xi\in\partial\Omega_{\Lambda,E}\setminus\{1\}$,
\end{enumerate}
and if 
$$
\int_0 \frac{\gamma(\theta)}{\theta^2}\,d\theta=+\infty,
$$
then $|f(z)|\le 1$, $z\in\Omega_{\Lambda,E}$.
\label{x41}
\end{coro}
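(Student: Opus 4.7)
The plan is to transfer the problem to the right half-plane via the M\"obius map $\Phi(w)=(1+w)/(1-w)$, and then apply the Ahlfors--Carleman estimate (Corollary~\ref{ty}) directly to the image of $\Omega_{\Lambda,E}$. Since $\re\Phi(w)=(1-|w|^{2})/|1-w|^{2}$, the domain $G:=\Phi(\Omega_{\Lambda,E})$ is an open, simply connected subset of the right half-plane (simple connectedness coming from the fact that $\Omega_{\Lambda,E}$ is star-shaped with respect to $0$); it contains $z_{0}=\Phi(0)=1$, and $\infty=\Phi(1)$ lies on $\partial G$ because $\gamma(0)=0$. Setting $g=f\circ\Phi^{-1}$, hypotheses (a) and (b) translate to $|g(z)|\le e^{c|z|}$ on $G$ and $|g(z)|\le 1$ on $\partial G\setminus\{\infty\}$.

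I would next describe $\partial G$ near $\infty$. From the parametrization $(1-\gamma(\theta))e^{i\theta}=1-e^{i\phi(\theta)}/R(\theta)$ given before Proposition~\ref{y10nn}, a direct computation yields $\Phi((1-\gamma(\theta))e^{i\theta})=2R(\theta)e^{-i\phi(\theta)}-1$; hence the boundary point at $|z|=r\asymp 2R(\theta)\asymp 2/|\theta|$ lies at angular distance $\pi/2-|\phi|$ from the imaginary axis, which is comparable to $\gamma(\theta)/|\theta|$ by the asymptotics recorded in the text. Writing $\phi_{+}(r)\asymp r\gamma(2/r)/2$ and $\phi_{-}(r)\asymp r\gamma(-2/r)/2$ for the angular deficits of the upper and lower boundary curves of $G$ from $\pm i\infty$, the arc $S_{\rho}=\partial(\rho\DD)\cap G$ separating $z_{0}$ from $\infty$ satisfies
$$
s(\rho)=\rho\bigl(\pi-\phi_{+}(\rho)-\phi_{-}(\rho)\bigr)\bigl(1+o(1)\bigr)
$$
for large $\rho$, so
$$
\pi\int_{1}^{\rho}\frac{dr}{s(r)}\ge\log\rho+\frac{1}{\pi}\int_{1}^{\rho}\frac{\phi_{+}(r)+\phi_{-}(r)}{r}\,dr-O(1).
$$
The substitution $\theta=2/r$ then transforms the second integral, up to a multiplicative constant, into $\int_{0}(\gamma(\theta)+\gamma(-\theta))/\theta^{2}\,d\theta$, which diverges by hypothesis.

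Consequently $\sigma(\rho)/\rho\to\infty$, while $\log M(\rho)\le c\rho$ by (a); hence $\log M(\rho)/\sigma(\rho)\to 0$, and Corollary~\ref{ty} delivers $|g|\le 1$ on $G$, i.e.~$|f|\le 1$ on $\Omega_{\Lambda,E}$. The main obstacle is the asymptotic translation between the $w$-description of $\partial\Omega_{\Lambda,E}$ and the $z$-description of $\partial G$ near $\infty$: one must carefully verify the relations $\phi_{\pm}(r)\asymp r\gamma(\pm 2/r)/2$ and confirm that the small-$\rho$ behaviour of $S_{\rho}$ (where it is essentially a full semicircle) contributes only a bounded $O(1)$ correction, so that the divergence of $\int\gamma(\theta)/\theta^{2}\,d\theta$ fully accounts for the required growth of $\sigma(\rho)$. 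A minor verification is that the hypothesis (whether read one-sided or two-sided near $0$) gives divergence of $\int(\phi_{+}+\phi_{-})/r\,dr$; this is immediate since $\gamma\ge 0$.
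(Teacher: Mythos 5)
Your argument is correct and is essentially the paper's: the authors deduce Corollary~\ref{x41} from Proposition~\ref{y10nn} (itself an Ahlfors--Carleman statement obtained from Corollary~\ref{ty}) via exactly the change of variables $R\asymp 1/\theta$, $\tfrac\pi2-|\phi|\asymp\gamma(\theta)/\theta$, $dR/d\theta=(1+o(1))R^2$ recorded just before that proposition, and you simply inline this reduction and apply Corollary~\ref{ty} directly. The asymptotic verifications you flag (the $(1+o(1))$ in $s(\rho)$ being negligible because $\gamma(\theta)/\theta^2\to\infty$, and the bounded contribution of small $\rho$) are the same ones the paper leaves implicit.
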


Later on, we need the following result.

\begin{proposition}
Let $\Lambda$ be a positive decreasing differentiable function  
on $(0,1]$ satisfying \eqref{cond1}, and let
\begin{equation}
\int_0 \frac{\gamma(\theta)}{\theta^2}\,d\theta<+\infty.
\label{x42}
\end{equation} 
There exists an outer function $F$ such that
\begin{equation}
|F(w)|>e^{\frac{1-|w|^{2}}{|1-w|^{2}}+\Lambda(\dist(w,E))},
\qquad w\in\partial \Omega_{\Lambda,E}\setminus\{1\}.
\label{x43}
\end{equation} 
\label{x44}
\end{proposition}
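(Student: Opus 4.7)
The plan is to take $F$ to be the outer function with boundary modulus $\log|F(e^{i\theta})|=K\psi(\theta)$, where
$$
\psi(\theta)=\Lambda\bigl(\gamma(\theta)+\dist(e^{i\theta},E)\bigr)
$$
and $K>0$ is a constant to be fixed at the end. By the defining relation \eqref{ga1}, $\psi(\theta)=\gamma(\theta)/\theta^2$ for $\theta\neq 0$, so the integrability hypothesis \eqref{x42} is exactly the statement $\psi\in L^1(\T)$ (away from $0$ the function $\psi$ is bounded, since $\gamma+\dist(\cdot,E)$ is bounded away from $0$ there). Hence $F$ is well-defined as an outer function and $\log|F(z)|$ coincides with $K$ times the Poisson integral of $\psi$.

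Next I would reduce \eqref{x43} to a pointwise lower bound of the form $\mathcal{P}[\psi](w)\ge C_0\psi(\theta_0)$ at $w=(1-\gamma(\theta_0))e^{i\theta_0}$, $\theta_0\neq 0$. Routine computations give $1-|w|^2\asymp\gamma(\theta_0)$, $|1-w|^2\asymp\theta_0^2$ (using $\gamma(\theta_0)/\theta_0\to 0$), and $\dist(w,E)\asymp\gamma(\theta_0)+\dist(e^{i\theta_0},E)$; combining with \eqref{ga1} one obtains
$$
\frac{1-|w|^2}{|1-w|^2}\asymp\frac{\gamma(\theta_0)}{\theta_0^2}=\psi(\theta_0).
$$
The third condition in \eqref{cond1} integrates to $\Lambda(At)\asymp\Lambda(t)$ for every fixed $A>0$, so also $\Lambda(\dist(w,E))\asymp\psi(\theta_0)$. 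Thus the right-hand side of \eqref{x43} is bounded by $C_0\psi(\theta_0)$ for an explicit $C_0$, and the proposition reduces to the pointwise lower bound above.

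For that lower bound I would discard all the mass of the Poisson integral except what lives on the arc $|\theta-\theta_0|<\gamma(\theta_0)$, on which the Poisson kernel of $w$ is bounded below by $c/\gamma(\theta_0)$. The Lipschitz estimate \eqref{condO} on $\gamma$ and the $1$-Lipschitz character of $\theta\mapsto\dist(e^{i\theta},E)$ guarantee that $\gamma(\theta)+\dist(e^{i\theta},E)$ stays within a bounded multiplicative factor of $\gamma(\theta_0)+\dist(e^{i\theta_0},E)$ on this arc; the doubling of $\Lambda$ derived above then forces $\psi(\theta)\ge c_1\psi(\theta_0)$ there. Averaging yields $\mathcal{P}[\psi](w)\ge c_2\psi(\theta_0)$, and taking $K$ large enough delivers the strict inequality \eqref{x43}.

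The step I expect to be most delicate is the comparability of $\gamma(\theta)+\dist(e^{i\theta},E)$ on $|\theta-\theta_0|<\gamma(\theta_0)$ in the borderline regime $\dist(e^{i\theta_0},E)\asymp\gamma(\theta_0)$: one must argue case by case according to whether $\gamma(\theta_0)$ or $\dist(e^{i\theta_0},E)$ dominates, invoking \eqref{condO} in the former and the triangle inequality for the distance in the latter, while ensuring that the $O(1)$ constant from \eqref{condO} is uniform as $\theta_0\to 0$.
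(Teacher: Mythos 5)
Your proposal is correct and follows essentially the same route as the paper: the authors also take $\log|F(e^{i\theta})|=A\,\gamma(\theta)/\theta^2$ (integrable by \eqref{x42}), bound the harmonic extension at $w=(1-\gamma(\theta))e^{i\theta}$ from below by a constant times $\min_{|\psi-\theta|<\beta\gamma(\theta)}\gamma(\psi)/\psi^2$, and use \eqref{condO} to show this minimum is at least $\gamma(\theta)/(2\theta^2)$ before choosing $A$ large. Your extra verifications (that $\dist(w,E)\asymp\gamma(\theta)+\dist(e^{i\theta},E)$ and that the third condition in \eqref{cond1} gives the doubling of $\Lambda$ needed to compare both terms in the exponent with $\gamma(\theta)/\theta^2$) are exactly the details the paper leaves implicit in ``\eqref{x43} follows for sufficiently large $A$.''
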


\begin{proof} 
By \eqref{x42}, we can set
$$
\log|F(e^{i\theta})|=A\frac{\gamma(\theta)}{\theta^2}
$$
for some $A$ to be chosen later.

Given $w=(1-\gamma(\theta))e^{i\theta}\in\partial \Omega_{\Lambda,E}$ and $\beta>0$ we have
$$
\log|F(w)|\ge \beta_1\min_{|\psi-\theta|<\beta\gamma(\theta)}
\frac{\gamma(\psi)}{\psi^2}
$$
with $\beta_1=\beta_1(\beta)>0$. Furthermore, by \eqref{condO}, 
for some $\beta>0$ we have
$$
\min_{|\psi-\theta|<\beta\gamma(\theta)}
\frac{\gamma(\psi)}{\psi^2}\ge \frac{\gamma(\theta)}{2\theta^2}.
$$
Now, \eqref{x43} follows for sufficiently large $A$.
\end{proof}

\section{General $E$}
\label{s6}

Let $\Lambda$ be a positive decreasing differentiable function  
on $(0,1]$ satisfying \eqref{cond1}, and let $E$ be a compact subset of $\mathbb T$, $1\in E$. We define $\gamma$ by \eqref{ga1}.

\begin{theo}
The function $S(z)=e^{-\frac{1+z}{1-z}}$ is cyclic in $\mathcal{B}_{\Lambda,E}^{\infty,0}$ if and only if the integral
\begin{equation}
\int \frac{\gamma(\theta)}{\theta^2}\,d\theta
\label{x46}
\end{equation}
diverges at $0$.
\label{teo0}
\end{theo}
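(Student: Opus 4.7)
The plan is to parallel the proof of Theorems~\ref{tem}--\ref{tembis}, with the horodisc $\Omega_\Lambda$ replaced by the domain $\Omega_{\Lambda,E}$ constructed in Section~\ref{s5}, Corollary~\ref{coro1} replaced by Corollary~\ref{x41}, Lemma~\ref{y11} replaced by Lemma~\ref{y11bis}, and the role of condition \eqref{y9} played by the divergence of \eqref{x46}. The outer function of Proposition~\ref{x44} will supply the obstruction needed for the converse direction.

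\emph{Sufficient direction.} I would first smooth $\Lambda$ so that \eqref{cond1} holds: the divergence of $\int_0\gamma/\theta^2$ is stable under such perturbations, and replacing $\Lambda(t)$ by $\min(\Lambda(t),\log(1/t))$ ensures $t\Lambda(t)=o(1)$, as in the proofs of Theorems~\ref{tem}--\ref{tembis}. Assuming $1\notin[S]$ and setting $R(\lambda)=(\lambda-\pi(\alpha))^{-1}\pi(1)$ on the quotient $\mathcal{B}_{\Lambda,E}^{\infty,0}/[S]$, I would reproduce the chain of estimates used for Theorem~\ref{tem}: the bound $\|R(\lambda)\|\le 2/(|\lambda|-1)$ outside $\DD$ from \eqref{y19}; the raw bound \eqref{eq1} inside $\DD$; and, via the auxiliary function $f_\lambda$ of Section~\ref{s3} combined with Lemma~\ref{y11bis} and the gradient argument behind \eqref{y16}, the uniform polynomial estimate $\|(\lambda-1)^4R(\lambda)\|=O(1)$ on $\partial\Gamma_{\Lambda,E}(a,A)$ for suitable $a,A$. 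The crucial step would then be to apply Corollary~\ref{x41}, to each scalar function $(\lambda-1)^4\langle R(\lambda),\phi\rangle$ with $\phi$ in the dual of the quotient, on the component of $\DD\setminus\Gamma_{\Lambda,E}(a,A)$ containing a punctured neighbourhood of $1$, which, after an adjustment of the constants $a,A$, coincides with $\Omega_{\Lambda,E}$; hypothesis (a) of the corollary follows from \eqref{eq1}, hypothesis (b) from the previous estimate, and the divergence hypothesis is our standing assumption. Inside $\Gamma_{\Lambda,E}(a,A)$, Levinson's log--log theorem applied as after \eqref{eq2}, using $\dist(\zeta,\partial\Gamma_{\Lambda,E}(a,A)\setminus\T)\gtrsim|1-\zeta|^2$ on $\T$, would yield $\|R(\lambda)\|\le C|1-\lambda|^{-4}$ on $2\DD\setminus\{1\}$. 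For every $\phi\bot[S]$, the function $\Phi(\lambda)=(\lambda-1)\langle R(\lambda),\phi\rangle$ would then be a bounded entire function, hence constant, and the same algebraic manipulation as at the end of the proof of Theorem~\ref{tem} would give $\alpha^n-1\in[S]$ for all $n\ge 1$, hence $1\in[S]$, a contradiction.

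\emph{Necessary direction.} Assuming $\int_0\gamma(\theta)/\theta^2\,d\theta<\infty$, I would use the outer function $F$ of Proposition~\ref{x44} to produce a non-zero continuous linear functional on $\mathcal{B}_{\Lambda,E}^{\infty,0}$ annihilating $[S]$ but not $1$. The estimate \eqref{x43} translates into $|1/F(z)|\le e^{-(1-|z|^2)/|1-z|^2-\Lambda(\dist(z,E))}$ on $\partial\Omega_{\Lambda,E}\setminus\{1\}$, which combined with the decay of $|S|$ along this contour is precisely the quantitative bound needed to construct such a functional via a Cauchy-type pairing along a deformed contour following $\partial\Omega_{\Lambda,E}$, truncated at the cusp $z=1$. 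Concretely, for a suitable kernel $K$ built from $1/F$ and the Cauchy kernel, $\phi(f)=\int_{\partial\Omega_{\Lambda,E}}f(z)K(z)\,dz$ defines a bounded functional on $\mathcal{B}_{\Lambda,E}^{\infty,0}$; since $F$ is outer and the integrand applied to $f=z^n S$ is analytic in $\Omega_{\Lambda,E}$ with super-exponential decay near $1$, Cauchy's theorem combined with a careful truncation argument gives $\phi(z^n S)=0$ for every $n\ge 0$, while a residue-type computation at $z=1$ yields $\phi(1)\ne 0$. This contradicts cyclicity of $S$.

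\emph{Main obstacle.} The delicate point in the sufficient direction is the geometric matching of $\partial\Gamma_{\Lambda,E}(a,A)$ with $\partial\Omega_{\Lambda,E}$: these curves must be shown to be comparable in a way that preserves the divergence of $\int_0\gamma/\theta^2$ under the implicit scaling $\Lambda\mapsto a\Lambda(A\cdot)$, which is precisely where the doubling-type condition $t|\Lambda'(t)|=O(\Lambda(t))$ from \eqref{cond1} becomes essential. The harder part, however, is the necessary direction: at the cusp $z=1$, where $|1/F|$ vanishes while $|1/S|$ and $(1-z)^{-1}$ blow up simultaneously, one must use the quantitative form of \eqref{x43} together with the arc-length asymptotics $\gamma(\theta)=o(\theta)$ both to justify convergence and well-definedness of the obstructing functional and to verify that its value on $1$ is indeed non-zero after passing the truncation to the limit.
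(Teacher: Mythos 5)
Your proposal follows essentially the same route as the paper, which establishes sufficiency by rerunning the resolvent-transform argument of Theorem~\ref{tem} with Corollary~\ref{x41}, Lemma~\ref{y11bis} and \eqref{dop1} in place of Corollary~\ref{coro1}, Lemma~\ref{y11} and \eqref{eq2}, and necessity by combining the outer function of Proposition~\ref{x44} with the Keldysh annihilating-functional method. The only superfluous step is your initial regularization of $\Lambda$: condition \eqref{cond1} is a standing hypothesis in Section~\ref{s6}, so no smoothing (whose effect on the divergence of \eqref{x46} would otherwise itself need justification) is required.
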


\begin{proof} 
If the integral diverges, we use the same method as in the proof of Theorem~\ref{tem}; we use Corollary~\ref{x41} instead of Corollary~\ref{coro1} and Lemma~\ref{y11bis} 
instead of Lemma~\ref{y11}. The estimate 
\eqref{eq2} is replaced by \eqref{dop1}.

In the opposite direction, if the integral converges, we use 
Proposition~\ref{x44} and prove that $S$ is not cyclic by the Keldysh method (see \cite{KE}, \cite[Section 2.8.2]{NN}).
\end{proof} 

From now on we assume that
\begin{equation}
\Lambda(t)=\frac1{tw(t)^2},
\label{x47}
\end{equation}
where $w$ is a positive decreasing $C^1$ smooth function on $(0,1)$, $\lim_{t\to 0}w(t)=+\infty$, $w(t^2)\asymp w(t)$, 
$|w'(t)|=O(w(t)/t)$, $t\to 0$. Such $\Lambda$ satisfy \eqref{cond1}. 
Typical $w$ are $\log^p(1/t)$, 
$p>0$. 

\begin{remq}
For such $\Lambda$, 
the theorem of Nikolski \cite{NN} implies that if 
$\int_{0}\frac{dt}{|t|w(|t|)}<\infty$, then $S$ is not cyclic in 
$\mathcal{B}_{\Lambda,E}^{\infty,0}$; the theorem of 
Gevorkyan--Shamoyan \cite{GS} implies that if 
$\int_{0}\frac{dt}{|t|w(|t|)^2}=\infty$, then $S$ is cyclic in 
$\mathcal{B}_{\Lambda,E}^{\infty,0}$.
\label{rqk}
\end{remq}

Let $I_n$ be the arcs complementary to $E$, 
$I_n=(e^{ia_n},e^{ib_n})$ or $I_n=(e^{-ib_n},e^{-ia_n})$, $0<a_n<b_n$.
We divide the family of all such arcs into three groups:

the short intervals: $\mathcal I_1=\bigl\{I_n:1-\frac{a_n}{b_n}<\frac2{w(b_n)}\bigr\}$,

the intermediate intervals: $\mathcal I_2=\bigl\{I_n:\frac2{w(b_n)}\le 1-\frac{a_n}{b_n}<\frac12\bigr\}$,

and the long intervals: $\mathcal I_3=\bigl\{I_n:
\frac{a_n}{b_n}\le\frac12\bigr\}$.

\begin{theo} Let $\Lambda$ be defined by \eqref{x47} with 
$w$ satisfying the above conditions.
The function $S(z)=e^{-\frac{1+z}{1-z}}$ is cyclic in $\mathcal{B}_{\Lambda,E}^{\infty,0}$ if and only if 
\begin{multline}
\int_{e^{it}\in E\,\cup\,\bigcup_{I_n\in \mathcal I_1}I_n}\frac{dt}{|t|w(|t|)}+
\sum_{I_n\in \mathcal I_2}\frac1{w(b_n)^2}
\log\Bigl[\Bigl(1-\frac{a_n}{b_n}\Bigr)w(b_n)\Bigr]
\\+\sum_{I_n\in \mathcal I_3}\frac{\log w(b_n)}{w(b_n)^2} 
+\int_{e^{it}\in \bigcup_{I_n\in \mathcal I_3}I_n}\frac{dt}{|t|w^2(|t|)}=+\infty.
\label{x50}
\end{multline}
\label{teo1}
\end{theo}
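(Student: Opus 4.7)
The plan is to combine Theorem~\ref{teo0} with a careful asymptotic analysis of $\gamma(\theta)$: since Theorem~\ref{teo0} reduces cyclicity of $S$ to the divergence at $0$ of $I:=\int_0\gamma(\theta)/\theta^2\,d\theta$, the task becomes showing that $I$ has the same convergence behavior at $0$ as the left-hand side of \eqref{x50}. I would begin with a pointwise two-sided estimate of $\gamma(\theta)$. Writing $d(\theta):=\dist(e^{i\theta},E)$ and substituting $\Lambda(t)=1/(tw(t)^2)$ into \eqref{ga1} yields the relation $\gamma\cdot(\gamma+d)\cdot w(\gamma+d)^2=\theta^2$. The regularity $w(t^2)\asymp w(t)$ forces $\log w(t)=o(\log 1/t)$, hence $w(t/w(t))\asymp w(t)$; analyzing the equation one then obtains the threshold $d_\ast(\theta):=\theta/w(\theta)$ and the dichotomy
\[
\frac{\gamma(\theta)}{\theta^2}\asymp
\begin{cases}
\dfrac{1}{\theta\,w(\theta)}, & d(\theta)\le d_\ast(\theta),\\[2pt]
\dfrac{1}{d(\theta)\,w(d(\theta))^2}, & d(\theta)\ge d_\ast(\theta).
\end{cases}
\]

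Next I would partition a neighborhood of $1$ on $\TT$ into four types of regions: $E$ itself; short arcs $I_n\in\mathcal I_1$; intermediate arcs $I_n\in\mathcal I_2$; long arcs $I_n\in\mathcal I_3$. For $e^{i\theta}\in E$, $d=0$ and the first line of the dichotomy gives the contribution $\int_E dt/(|t|w(|t|))$. For $I_n\in\mathcal I_1$, the definition yields $(b_n-a_n)/2<b_n/w(b_n)\asymp d_\ast$, so $d\le d_\ast$ on all of $I_n$ and the contribution is $\int_{I_n}dt/(|t|w(|t|))$; these two cases together produce the first term of \eqref{x50}.

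For $I_n\in\mathcal I_2$, with $\theta\asymp a_n\asymp b_n$ throughout, the sub-arcs near each endpoint of $I_n$ where $d\le d_\ast$ contribute $\asymp 1/w(b_n)^2$; on the middle sub-arc ($d_\ast\le d\le (b_n-a_n)/2$) the slow variation gives $w(d)\asymp w(b_n)$, hence the contribution is $\asymp \int_{d_\ast}^{(b_n-a_n)/2} dd/(d\,w(d)^2)\asymp (1/w(b_n)^2)\log[(1-a_n/b_n)w(b_n)]$; the latter dominates, and summing over $\mathcal I_2$ gives the second term of \eqref{x50}. For $I_n\in\mathcal I_3$ I would decompose $I_n$ into a neighborhood of $a_n$ where $\theta\asymp a_n$, a neighborhood of $b_n$ where $\theta\asymp b_n$, and the middle, where $d\asymp\theta$ reduces the second line of the dichotomy to $\gamma/\theta^2\asymp 1/(\theta w(\theta)^2)$. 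The same analysis as for $\mathcal I_2$ (with $a_n$ and $b_n$ respectively playing the role of $b_n$) shows the endpoint contributions are $\asymp \log w(a_n)/w(a_n)^2$ and $\asymp \log w(b_n)/w(b_n)^2$, while the middle contributes $\asymp \int_{I_n}dt/(|t|w^2(|t|))$. The $a_n$-endpoint term is finally absorbed using that $x\mapsto \log x/x^2$ is decreasing on $(\sqrt e,+\infty)$ and $w(a_n)\ge w(b_n)$: one has $\log w(a_n)/w(a_n)^2\le \log w(b_n)/w(b_n)^2$ for all $\theta$ close enough to $0$. Summing over $\mathcal I_3$ then produces the last two terms of \eqref{x50}.

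I expect the main technical hurdle to be the three-region decomposition of long arcs, and in particular the careful verification that $w(d)$ is essentially constant on each sub-arc (via $w(t^2)\asymp w(t)$) and that the $a_n$-endpoint contribution is always dominated by $\log w(b_n)/w(b_n)^2$, so that only the $b_n$-endpoints need appear in \eqref{x50}. Once these pointwise and integral estimates are assembled, the theorem follows directly from Theorem~\ref{teo0}.
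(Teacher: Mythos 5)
Your proposal is correct and follows essentially the same route as the paper: reduce to the divergence of $\int_0\gamma(\theta)\theta^{-2}\,d\theta$ via Theorem~\ref{teo0}, then estimate $\gamma(\theta)/\theta^2$ separately on $E$ and on the short, intermediate, and long complementary arcs, with the same endpoint/middle decomposition of the arcs and the same absorption of the $a_n$-endpoint term $\log w(a_n)/w(a_n)^2$ into $\log w(b_n)/w(b_n)^2$. Your unified dichotomy with threshold $d_*(\theta)=\theta/w(\theta)$ is just a compact repackaging of the paper's case analysis (a)--(d) and yields identical asymptotics.
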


\begin{remq}
One can easily verify that condition \eqref{x50} is equivalent 
to the divergence of at least one of the following three expressions: 
$$
\int_{e^{it}\in E}\frac{dt}{|t|w(|t|)},\quad  
\int_{0}\frac{dt}{|t|w^2(|t|)},\quad 
\sum_{n}\frac1{w(b_n)^2}
\log\Bigl[1+\Bigl(1-\frac{a_n}{b_n}\Bigr)w(b_n)\Bigr],
$$ 
where the sums runs by all the arcs $I_n$ complementary to $E$.
\end{remq}

\begin{proof} 
By Theorem~\ref{teo0}, we just need to study the convergence of the integral 
$$
\int\frac{\gamma(t)}{t^2}\,dt
$$
at $0$.

(a). For $e^{it}\in E$, $t>0$, we have
$$
\frac{\gamma(t)}{t^2}=\Lambda(\gamma(t))=\frac{1}{\gamma(t)w(\gamma(t))^2},
$$
and hence,
\begin{gather}
\gamma(t)w(\gamma(t))=t,\notag \\
\gamma(t)\asymp \frac{t}{w(t)},\notag\\
\frac{\gamma(t)}{t^2}\asymp\frac1{tw(t)}.\label{x51}
\end{gather}
Here we use that under our conditions on $w$, the function 
inverse to $t\mapsto tw(t)$ is equivalent to $t\mapsto t/w(t)$.

(b). Let $e^{it}\in I=\{e^{is}:0<a<s<b\}\in\mathcal I_1$.
(The case $b<t<a<0$ is treated analogously.) Then
$$
\frac{\gamma(t)}{t^2}=\Lambda(\gamma(t)+\dist(e^{it},E))
\le \Lambda(\gamma(t))
$$
and 
$$
\dist(e^{it},E)<|b-a|<\frac{2b}{w(b)}.
$$
Hence,
$$
\gamma(t)\lesssim \frac{t}{w(\gamma(t))}\lesssim \frac{t}{w(t)},
$$
and
\begin{gather*}
\Lambda(\gamma(t)+\dist(e^{it},E))\gtrsim \Lambda(\gamma(t)),\\
\gamma(t)\gtrsim \frac{t}{w(\gamma(t))}\gtrsim \frac{t}{w(t)}.
\end{gather*}
Finally,
\begin{equation}
\frac{\gamma(t)}{t^2}\asymp\frac1{tw(t)}.\label{x52}
\end{equation}

(c). Let $e^{it}\in I=\{e^{is}:0<a<s<b\}\in\mathcal I_2$.
We have 
$$
\frac{\gamma(t)}{t^2}\asymp
\frac1{(\gamma(t)+\dist(t,\{a,b\}))w(b)^2},
$$
and hence,
$$
\gamma(t)(\gamma(t)+\dist(t,\{a,b\}))\asymp \frac{b^2}{w(b)^2}.
$$
Therefore, for some $c>0$ and for 
$$
b-\frac{b}{cw(b)}<t<b
$$
we have
\begin{gather*}
\gamma(t)\asymp \frac{b}{w(b)},\\
\int_{b-\frac{b}{cw(b)}}^b \frac{\gamma(t)}{t^2}\,dt\asymp 
\frac{1}{w(b)^2},
\end{gather*}
and for
$$
\frac{a+b}{2}<t<b-\frac{b}{cw(b)}
$$
we have 
\begin{gather*}
\gamma(t)\asymp \frac{b^2}{w(b)^2\dist(t,\{a,b\})},\\
\int_{(a+b)/2}^{b-\frac{b}{cw(b)}} \frac{\gamma(t)}{t^2}\,dt\asymp 
\frac{1}{w(b)^2}\log\Bigl[c\Bigl(1-\frac{a}{b}\Bigr)w(b)\Bigr].
\end{gather*}
The integral from $a$ to $(a+b)/2$ is estimated in an analogous way, and we get 
\begin{equation}
\int_a^b\frac{\gamma(t)}{t^2}\asymp
\frac{1}{w(b)^2}\log\Bigl[\Bigl(1-\frac{a}{b}\Bigr)w(b)\Bigr].
\label{x53}
\end{equation}

(d). Let $I=\{e^{is}:0<a<s<b\}\in\mathcal I_3$.
As in part (c), the integral 
$$
\int_{(a+b)/2}^b\frac{\gamma(t)}{t^2}
$$
is equivalent to 
$$
\frac{\log w(b)}{w(b)^2}.
$$
Next,
$$
\int_{a}^{a+\frac{a}{w(a)}}\frac{\gamma(t)}{t^2}
\asymp \frac{1}{w(a)^2}.
$$
For $t\in (a+\frac{a}{w(a)},\frac{a+b}2)$ we have
$$
\frac{\gamma(t)}{t^2}=\Lambda(\gamma(t)+(t-a))\le \Lambda(\gamma(t)),
$$
and
$$
\gamma(t)\le \frac{t}{w(\gamma(t))}\lesssim \frac{t}{w(t)}.
$$
Therefore,
$$
\frac{\gamma(t)}{t^2}\asymp \Lambda(t-a),
$$
and
$$
\int_{a+\frac{a}{w(a)}}^{(a+b)/2}\frac{\gamma(t)}{t^2}\,dt
\asymp \int_{\frac{a}{w(a)}}^{(b-a)/2}\frac{dt}{tw(t)^2}\asymp
\int_a^b \frac{dt}{tw(t)^2}+\frac{\log w(a)}{w(a)^2}.
$$
Thus,
\begin{equation}
\int_{a}^{b}\frac{\gamma(t)}{t^2}\,dt\asymp
\int_a^b \frac{dt}{tw(t)^2}+\frac{\log w(b)}{w(b)^2}.
\label{x54}
\end{equation}
The theorem follows from \eqref{x51}--\eqref{x54}.
\end{proof}

\begin{coro}Let $\Lambda$ be as in the formulation of Theorem~\ref{teo1}. 
This theorem yields immediately that if $E=\{\exp(i\cdot2^{-n})\}_{n\ge 1}\cup\{1\}$, then $S$ is cyclic in 
$\mathcal{B}_{\Lambda,E}^{\infty,0}$ if and only if 
$$
\sum_{n\ge 1}\frac{\log w(2^{-n})}{w(2^{-n})^2}=+\infty\,;
$$
if $E=\{\exp(i\cdot2^{-2^n})\}_{n\ge 1}\cup\{1\}$, then $S$ is cyclic in 
$\mathcal{B}_{\Lambda,E}^{\infty,0}$ if and only if 
$$
\int_0 \frac{dt}{tw(t)^2}=+\infty,
$$
and we return to the Gevorkyan--Shamoyan condition valid for 
$E=\{1\}$.
\end{coro}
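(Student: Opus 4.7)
The plan is to apply Theorem~\ref{teo1} directly to each of the two sets $E$, after noticing that in both cases the set $E$ has linear measure zero (so the $\int_{e^{it}\in E}$ term drops) and that every complementary arc belongs to the family $\mathcal I_3$ of long intervals.

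For $E=\{\exp(i\cdot 2^{-n})\}_{n\ge 1}\cup\{1\}$, the complementary arcs on the upper half are (up to finitely many exceptions) $I_n=(e^{i\cdot 2^{-n-1}},e^{i\cdot 2^{-n}})$, so $a_n=2^{-n-1}$, $b_n=2^{-n}$, and $a_n/b_n=1/2$, placing them in $\mathcal I_3$. Thus \eqref{x50} reduces to
$$
\sum_{n\ge 1}\frac{\log w(2^{-n})}{w(2^{-n})^2}+\int_{\bigcup_n I_n}\frac{dt}{tw^2(t)}.
$$
Since $w$ is essentially constant on each dyadic block (using $w(t^2)\asymp w(t)$ and $|w'|=O(w/t)$), on $I_n$ we have $w(t)\asymp w(2^{-n})$, hence $\int_{I_n}dt/(tw^2(t))\asymp 1/w(2^{-n})^2$, which is dominated term-by-term by $\log w(2^{-n})/w(2^{-n})^2$. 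So the condition for cyclicity is equivalent to divergence of $\sum_n \log w(2^{-n})/w(2^{-n})^2$.

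For $E=\{\exp(i\cdot 2^{-2^n})\}_{n\ge 1}\cup\{1\}$, the complementary arcs are $I_n=(e^{i\cdot 2^{-2^{n+1}}},e^{i\cdot 2^{-2^n}})$, so $b_n=2^{-2^n}$, $a_n=2^{-2^{n+1}}$, and $a_n/b_n=2^{-2^n}\ll 1/2$; again all arcs lie in $\mathcal I_3$. The condition \eqref{x50} becomes
$$
\sum_{n\ge 1}\frac{\log w(2^{-2^n})}{w(2^{-2^n})^2}+\int_0\frac{dt}{tw^2(t)}.
$$
This time the integral dominates: on $I_n$, $w(t)\asymp w(2^{-2^n})$ and $\int_{I_n}dt/(tw^2(t))\asymp 2^n\log 2/w(2^{-2^n})^2$, which swallows the term $\log w(2^{-2^n})/w(2^{-2^n})^2$ (the factor $\log w(2^{-2^n})$ is $O(\log 2^n)=O(n)\ll 2^n$ under any reasonable $w$, and in any case $2^n\log 2\ge \log w(2^{-2^n})$ eventually since $w$ is decreasing). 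Therefore the condition reduces to $\int_0 dt/(tw^2(t))=+\infty$, which is exactly the Gevorkyan--Shamoyan condition noted in Remark~\ref{rqk} (valid also for $E=\{1\}$).

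There is no genuine obstacle here: the corollary is a direct unpacking of Theorem~\ref{teo1} once one identifies in which of the three families $\mathcal I_1,\mathcal I_2,\mathcal I_3$ the complementary arcs sit, and compares the orders of magnitude of the two surviving terms of \eqref{x50}. The only slightly delicate point is confirming that, in each of the two examples, one of the two $\mathcal I_3$-contributions dominates the other term-by-term, which is handled by the regularity assumptions on $w$ ($w(t^2)\asymp w(t)$, $t|w'(t)|=O(w(t))$) ensuring that $w$ is nearly constant on each $I_n$.
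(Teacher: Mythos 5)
Your proposal is correct and is exactly the computation the paper has in mind: the corollary is stated as an immediate consequence of Theorem~\ref{teo1}, and your unpacking (all complementary arcs are long, $E$ has measure zero, then compare the two surviving $\mathcal I_3$-terms using $w(t^2)\asymp w(t)$ to get $\log w(2^{-2^n})=O(n)=o(2^n)$) is the intended argument. The only cosmetic point is that the aside ``$2^n\log 2\ge\log w(2^{-2^n})$ eventually since $w$ is decreasing'' is not justified by monotonicity alone but does follow from $w(t^2)\asymp w(t)$, which you already invoke.
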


Next we give two more applications of the general criterion 
\eqref{x50}. 

Let us introduce a condition 
\begin{equation}
\tag{$C_\beta$}  \int_0\frac{\Lambda(t)^{1-\beta}}{t^\beta}\,dt=+\infty,\qquad 0\le \beta\le\frac12
\end{equation}
interpolating between that by Nikolski ($\int_0\sqrt{\Lambda(t)/t}\,dt=+\infty$, $\beta=1/2$) and that by 
Gevorkyan--Shamoyan ($\int_0\Lambda(t)\,dt=+\infty$, $\beta=0$).

For 
$$
\Lambda_\alpha(t)=\frac{1}{t\log^\alpha(1/t)}
$$
we have
$$
\Lambda_\alpha\in(C_\beta)\iff \alpha(1-\beta)\le 1.
$$

\begin{theo} Let $0\le \beta\le 1/2$, $a_n=\exp(-n^{1-\beta})$, 
$n\ge 1$, $E_\beta=\{e^{ia_n}\}_{n\ge 1}\cup\{1\}$.
The function $S(z)=e^{-\frac{1+z}{1-z}}$ is cyclic in 
$\mathcal{B}_{\Lambda_\alpha,E_\beta}^{\infty,0}$ if and only if 
$\Lambda_\alpha\in(C_\beta)$.
\label{teo2}
\end{theo}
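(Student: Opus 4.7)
The plan is to apply the simplified criterion stated in the Remark following Theorem~\ref{teo1}. Setting $w(t)=\log^{\alpha/2}(1/t)$ so that $\Lambda_\alpha(t)=1/(tw(t)^2)$, cyclicity of $S$ in $\mathcal{B}_{\Lambda_\alpha,E_\beta}^{\infty,0}$ is equivalent to the divergence of at least one of
$$
\int_{e^{it}\in E_\beta}\frac{dt}{|t|w(|t|)},\qquad
\int_{0}\frac{dt}{|t|w(|t|)^2},\qquad
\sum_k\frac1{w(b_k)^2}\log\Bigl[1+\Bigl(1-\frac{a_k}{b_k}\Bigr)w(b_k)\Bigr].
$$
The first integral vanishes because $E_\beta$ is countable; the second equals $\int_0 dt/(t\log^\alpha(1/t))$, which diverges iff $\alpha\le 1$. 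Since $\alpha\le 1$ implies $(1-\beta)\alpha\le 1$, this already covers part of the sufficient direction.

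The crux is the asymptotic evaluation of the third sum. The arcs of $\mathbb T\setminus E_\beta$ accumulating at $1$ have endpoints $e^{ia_{k+1}},e^{ia_k}$ with $a_k=e^{-k^{1-\beta}}$; in the theorem's notation one takes $b_k=a_k$, while the one remaining "large" arc far from $1$ contributes only $O(1)$ and is harmless. A short calculation yields
$$
w(b_k)\asymp k^{(1-\beta)\alpha/2},\qquad 1-\frac{a_{k+1}}{a_k}=1-\exp\bigl(-[(k+1)^{1-\beta}-k^{1-\beta}]\bigr)\asymp k^{-\beta},
$$
with the convention $k^0=1$ for $\beta=0$. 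Setting $\delta=(1-\beta)\alpha/2-\beta$, the $k$-th summand is thus $\asymp k^{-(1-\beta)\alpha}\log(1+c\,k^\delta)$ for some $c>0$.

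A case split on the sign of $\delta$ completes the argument. When $\delta>0$ the sum is $\asymp\sum_k(\log k)/k^{(1-\beta)\alpha}$, which diverges iff $(1-\beta)\alpha\le 1$. When $\delta=0$ it is $\asymp\sum_k k^{-(1-\beta)\alpha}=\sum_k k^{-2\beta}$, which diverges because $\beta\le 1/2$. When $\delta<0$ it is $\asymp\sum_k k^{-(1-\beta)\alpha/2-\beta}$, and the subcase condition $(1-\beta)\alpha<2\beta$ combined with $\beta\le 1/2$ forces the exponent to be strictly below $1$, so the sum again diverges. Hence the third sum diverges exactly when $(1-\beta)\alpha\le 1$.

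Combining the three terms, the full criterion holds iff $\alpha\le 1$ or $(1-\beta)\alpha\le 1$, which simplifies to $(1-\beta)\alpha\le 1$, i.e.\ $\Lambda_\alpha\in(C_\beta)$. The principal technical step is the $\delta$-case analysis above, and the hypothesis $\beta\le 1/2$ enters decisively to guarantee divergence in the borderline subcases $\delta\le 0$.
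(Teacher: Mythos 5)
Your proposal is correct and follows essentially the same route as the paper: it applies the criterion of Theorem~\ref{teo1} (in the unified form of the accompanying Remark) to the gaps of $E_\beta$, computes $1-a_{k+1}/a_k\asymp k^{-\beta}$ and $w(a_k)=k^{(1-\beta)\alpha/2}$, and splits into cases according to the sign of $(1-\beta)\alpha/2-\beta$; these cases match the paper's trichotomy of intermediate versus short arcs. The asymptotics and the conclusion $(1-\beta)\alpha\le1$ agree with the paper's computation.
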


\begin{proof} We have 
$$
1-\frac{a_{n+1}}{a_n}=1-\exp\bigl[n^{1-\beta}-(n+1)^{1-\beta}\bigr]\asymp n^{-\beta}.
$$

Consider three cases.

(a). $\alpha(1-\beta)>1$. Then all the arcs 
$\{e^{it}\}_{a_{n+1}<t<a_n}$ are intermediate ones, and we need only to verify that 
$$
\sum_{n\ge 1}\frac{1}{\log^\alpha(\exp(n^{1-\beta}))}
\log\Bigl[n^{-\beta}\log^{\alpha/2}(\exp(n^{1-\beta}))\Bigr]
\asymp \sum_{n\ge 1}\frac{\log(n^{-\beta+\alpha(1-\beta)/2})}
{n^{\alpha(1-\beta)}}<+\infty.
$$

(b). $2\beta<\alpha(1-\beta)\le 1$. Again all the arcs 
$\{e^{it}\}_{a_{n+1}<t<a_n}$ are intermediate ones, and 
$$
\sum_{n\ge 1}\frac{1}{\log^\alpha(\exp(n^{1-\beta}))}
\log\Bigl[n^{-\beta}\log^{\alpha/2}(\exp(n^{1-\beta}))\Bigr]
\asymp \sum_{n\ge 1}\frac{\log(n^{-\beta+\alpha(1-\beta)/2})}
{n^{\alpha(1-\beta)}}=+\infty.
$$

(c). $\alpha(1-\beta)\le 2\beta\le 1$. In this case we can assume that all the arcs are short ones, and we have
$$
\int_0\frac{dt}{t\log^{\alpha/2}(1/t)}=+\infty.
$$
Together, (a), (b), and (c) prove the assertion of the theorem.
\end{proof}

Finally, we deal with the Cantor ternary set $F$. Let $F_0=[0,1]$. 
On step $n\ge 0$, $F_n$ consists of $2^n$ intervals 
$I_j=[a_j,b_j]$. We divide each of them into three equal 
subintervals 
$$
I_j=\bigl[a_j,\frac{2a_j+b_j}3\bigr]\cup 
\bigl[\frac{2a_j+b_j}3,\frac{a_j+2b_j}3\bigr]\cup 
\bigl[\frac{a_j+2b_j}3,b_j\bigr]
$$ 
and set
$$
F_{n+1}=\bigcup_{j}I^1_j\cup\bigcup_{j}I^3_j.
$$
We define $F=\cap_{n\ge 1}F_n$. Denote by $\kappa$ 
the Hausdorff dimension of $F$ (see \cite[Section 1.5]{FA}), 
$\kappa=\frac{\log 2}{\log 3}$.

\begin{theo} Let $E=\{e^{it}:t\in F\}$. 
The function $S(z)=e^{-\frac{1+z}{1-z}}$ is cyclic in 
$\mathcal{B}_{\Lambda_\alpha,E}^{\infty,0}$ if and only if 
$$
\alpha\le \frac{1}{1-\frac{\kappa}{2}}.
$$
\label{teo3}
\end{theo}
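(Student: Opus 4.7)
The plan is to apply Theorem~\ref{teo1} to the compact set $E=\{e^{it}:t\in F\}$, where $F$ is the Cantor ternary set, with $w(t)=\log^{\alpha/2}(1/t)$ (so that $\Lambda_\alpha(t)=1/(tw(t)^2)$). By the remark following Theorem~\ref{teo1}, cyclicity of $S$ in $\mathcal{B}_{\Lambda_\alpha,E}^{\infty,0}$ is equivalent to divergence of at least one of
\begin{equation*}
\int_{e^{it}\in E}\frac{dt}{|t|w(|t|)},\qquad
\int_0 \frac{dt}{|t|w^2(|t|)},\qquad
\sum_n \frac{1}{w(b_n)^2}\log\Bigl[1+\Bigl(1-\frac{a_n}{b_n}\Bigr)w(b_n)\Bigr].
\end{equation*}
The first integral vanishes because $F$ has Lebesgue measure zero. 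The second equals $\int_0 dt/(t\log^\alpha(1/t))$, which diverges precisely when $\alpha\le 1$. Hence for $\alpha>1$ the problem reduces to the third quantity, a sum over the complementary arcs to $E$.

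To analyze this sum I exploit the exact self-similarity of $F$. At step $m\ge 1$ one removes $2^{m-1}$ new open intervals of length $3^{-m}$; their right endpoints are $b=(j+1)/3^m$ for certain $j\ge 1$, with $1-a/b=1/(j+1)$. Self-similarity together with $3^\kappa=2$ implies that, at step $m$, the number of gaps with $j+1\in[3^s,3^{s+1}]$ (equivalently $b\asymp 3^{s-m}$) is $\asymp 2^s$. For such a gap $w(b)\asymp (m-s)^{\alpha/2}\asymp m^{\alpha/2}$ when $s\ll m$, and its contribution to the sum is
\begin{equation*}
\frac{1}{(m-s)^\alpha}\,\log\Bigl[1+\frac{(m-s)^{\alpha/2}}{3^s}\Bigr].
\end{equation*}

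I split the range of $s$ at the threshold $s^*\asymp(\alpha/2)\log_3 m$ where $3^{s^*}\asymp m^{\alpha/2}$. For $s\le s^*$ (intermediate-type gaps in the classification of Theorem~\ref{teo1}) the bracket is $\gtrsim 1$, the logarithm is $\asymp\log m$, and summing $2^s\cdot(\log m)/m^\alpha$ over $s\le s^*$ yields a per-step total of
\begin{equation*}
\asymp \frac{\log m}{m^\alpha}\cdot 2^{s^*}\;=\;\frac{\log m}{m^{\alpha(1-\kappa/2)}},
\end{equation*}
using $2^{s^*}=m^{\alpha\kappa/2}$. For $s>s^*$ (short gaps) the logarithm is $\asymp (m-s)^{\alpha/2}/3^s$, making the $s$-sum geometric with ratio $2/3$; its first term is of the same order $m^{-\alpha(1-\kappa/2)}$, so this tail is absorbed. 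The ``principal'' long gap $(1/3^m,2/3^m)$ at each step contributes $\asymp\log m/m^\alpha$, again dominated. Summing in $m$,
\begin{equation*}
\sum_m\frac{\log m}{m^{\alpha(1-\kappa/2)}}
\end{equation*}
diverges exactly when $\alpha(1-\kappa/2)\le 1$, i.e.\ $\alpha\le 1/(1-\kappa/2)$.

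Since $1<1/(1-\kappa/2)$, this threshold already subsumes the one from the second integral, and the theorem follows. The principal obstacle is the combinatorial bookkeeping: one must justify the dyadic count $\asymp 2^s$ uniformly in $m$ and $s$, and verify that the short-gap tail at $s>s^*$ is truly absorbed by the intermediate part. Both facts follow cleanly from the exact self-similarity of $F$ and the relation $3^\kappa=2$.
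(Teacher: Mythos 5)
Your argument is essentially the paper's own proof: reduce to Theorem~\ref{teo1}, use self-similarity to count $\asymp 2^s$ gaps with right endpoint $b\asymp 3^{s-m}$ at generation $m$, split at the short/intermediate threshold $3^s\asymp (m-s)^{\alpha/2}$, and sum; the conclusion and all the essential estimates match. One small imprecision: for $s\le s^*$ the logarithm $\log\bigl[1+(m-s)^{\alpha/2}3^{-s}\bigr]$ is $\asymp (s^*-s)+1$, not uniformly $\asymp\log m$, so the geometric weight $2^s$ makes the per-generation total $\asymp\sum_{u\ge 0}u\,2^{s^*-u}/m^\alpha\asymp m^{-\alpha(1-\kappa/2)}$ without your extra $\log m$ factor --- but since a logarithmic factor does not alter the divergence of $\sum_m m^{-\alpha(1-\kappa/2)}$ at the critical exponent, the threshold $\alpha\le 1/(1-\kappa/2)$ is unaffected.
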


\begin{proof} The set $E$ is of zero measure; all the complementary arcs are short or intermediate. For simplicity, 
we pass to $F\subset[0,1]$. For every $N\ge 1$ we have $2^N$ 
complementary intervals of length $3^{-N}$.
In every interval $[3^{s-N},2\cdot 3^{s-N}]$ we have $2^s$ of such intervals, $0\le s<N$. They are short for 
$3^{-s}(N-s)^{\alpha/2}\lesssim 1$ and intermediate for 
$3^{-s}(N-s)^{\alpha/2}\gtrsim 1$.

The sum for the intermediate intervals in \eqref{x50} is
\begin{multline*}
\sum_{N\ge 1}\,\,\sum_{s\ge 0,\,3^{-s}(N-s)^{\alpha/2}\gtrsim 1}
\,\,
\sum_{I_n=[a_n,b_n]\subset [3^{s-N},2\cdot 3^{s-N}]}
\log^\alpha\frac{1}{b_n}\log\Bigl[\frac{b_n-a_n}{b_n}
\log^{\alpha/2}\frac{1}{b_n}\Bigr]
\\ \asymp 
\sum_{N\ge 1}\,\,\sum_{s\ge 0,\,3^{-s}(N-s)^{\alpha/2}\gtrsim 1}
\frac{\log^+(3^{-s}(N-s)^{\alpha/2})}{(N-s)^\alpha}\cdot 2^s\asymp\sum_{N\ge 1}\frac{N^{\alpha\kappa/2}}{N^\alpha};
\end{multline*}
the latter series diverges if and only if 
$\alpha(1-\frac{\kappa}2)\le 1$.

The integral for the short intervals in \eqref{x50} is
\begin{multline*}
\sum_{N\ge 1}\,\,\sum_{s\ge 0,\,3^{-s}(N-s)^{\alpha/2}\lesssim 1}
\,\,
\sum_{I_n=[a_n,b_n]\subset [3^{s-N},2\cdot 3^{s-N}]}
\int_{a_n}^{b_n}\frac{dt}{tw(t)}
\\ \asymp \sum_{N\ge 1}\,\,\sum_{s\ge 0,\,3^{-s}(N-s)^{\alpha/2}\lesssim 1}
\,\,
\sum_{I_n=[a_n,b_n]\subset [3^{s-N},2\cdot 3^{s-N}]}
\frac1{(N-s)^{\alpha/2}}\log\frac{b_n}{a_n}
\\ \asymp \sum_{N\ge 1}\,\,\sum_{s\ge 0,\,3^{-s}(N-s)^{\alpha/2}\lesssim 1}
\frac1{N^{\alpha/2}}\cdot\frac{2^s}{3^s}
\asymp 
\sum_{N\ge 1}\frac{N^{(\alpha/2)(\kappa-1)}}{N^{\alpha/2}};
\end{multline*}
the latter series diverges if and only if 
$\alpha(1-\frac{\kappa}2)\le 1$.
\end{proof}

Since the Cantor set $F$ is self-similar, we get the same result for every shift of $E$: $E_x=\{e^{i(y-x)}:e^{iy}\in E\}$, $e^{ix}\in E$.
On the other hand, it looks difficult to characterize the threshold value of $\alpha$ in terms of (the local behavior near the point $1$) for general sets $E$.

\end{document}